\newtheorem{thm}{Theorem}[section]
\newtheorem*{maincor}{Corollary \ref{cor:main} (Rephrased)}
\newtheorem{cor}[thm]{Corollary}
\newtheorem{defin}[thm]{Definition}
\newtheorem{lemma}[thm]{Lemma}
\newtheorem{rmk}[thm]{Remark}
\newcommand{\nil}{\varnothing}
\newcommand{\inter}[1]{\mathring{#1}}
\newcommand{\bdd}{\mbox{$\partial$}}
\newcommand{\boundary}{\partial}
\newcommand{\mc}[1]{\mathcal{#1}}
\newcommand{\cls}{\operatorname{cl}}
\begin{document}

\subjclass{57M25, 57M27, 57M50}

\keywords {}

\title[C-essential surfaces in $(M,T)$]{C-essential surfaces in (3-manifold, graph) pairs}

\author{Scott A. Taylor}

\author{Maggy Tomova}
   \thanks{The first author was partially supported by a grant from the Natural Sciences Division of Colby College. The second author was supported by a grant from the National Science Foundation.}
   
\begin{abstract}
Let $T$ be a graph in a compact, orientable 3--manifold $M$ and let $\Gamma$ be a subgraph. $T$ can be placed in bridge position with respect to a Heegaard surface $H$. We show that if $H$ is what we call $(T,\Gamma)$-c-weakly reducible in the complement of $T$ then either a ``degenerate'' situation occurs or $H$ can be untelescoped and consolidated into a collection of ``thick surfaces'' and ``thin surfaces''. The thin surfaces are c-essential (c-incompressible and essential) in the graph exterior and each thick surface is a strongly irreducible bridge surface in the complement of the thin surfaces. This strengthens and extends previous results of Hayashi-Shimokawa and Tomova to graphs in 3-manifolds that may have non-empty boundary. \end{abstract}
\maketitle

\section{Introduction}
Thin position has been an important tool in knot theory and 3-manifold topology. Gabai \cite{G} defined thin position for a knot in $S^3$ in his solution of the Poenaru Conjecture; it was also put to good use by Gordon and Luecke \cite{GL} in their solution of the knot complement problem. Thin position was extended to graphs in $S^3$ by Scharlemann and Thompson \cite{ST1} who used it to give a new proof of Waldhausen's classification of Heegaard splittings of $S^3$. Goda, Scharlemann, and Thompson \cite{GST} also used thin position for graphs in $S^3$ to prove that an unknotting tunnel for a tunnel number one knot in minimal bridge position can be slid and isotoped to lie in the bridge surface. They used this fact to give a new proof of the classification of unknotting tunnels for 2--bridge knots. Goda, Scharlemann, and Thompson's result is also foundational to Cho and McCullough's \cite{CM} recent important work on tunnel number one knots. 

In tandem with their development of thin position for knots and graphs in $S^3$, Scharlemann and Thompson \cite{ST1} described a very different thin position for 3--manifolds (based on earlier work of Casson and Gordon \cite{CG}). In this type of thin position, a 3--manifold is decomposed along incompressible surfaces (called ``thin surfaces'') into codimension 0 submanifolds each of which contains a strongly irreducible Heegaard surface (called a ``thick surface'').  A locally thin position for a 3-manifold can be obtained by ``untelescoping'' a Heegaard surface for the 3--manifold into thin and thick surfaces. In practice, a strongly irreducible Heegaard surface functions very much like an incompressible surface. Part of the power of thin position for a 3-manifold comes from the fact that an irreducible Heegaard surface in a 3-manifold is either strongly irreducible, or can be untelescoped into a collection of incompressible surfaces and strongly irreducible surfaces. In either case, we have surfaces that function like incompressible surfaces. This aspect of thin position for 3--manifolds has been used, for example, in work on the virtual Haken conjecture \cite{L, Ma}. The virtual Haken conjecture was recently proven by Agol, Groves, and Manning \cite{Ag}.

Hayashi and Shimokawa \cite{HS3} found a way of uniting (at least in spirit) the notions of thin position for a knot and thin position for a 3-manifold. Their version of thin position involves placing a properly embedded 1-manifold in a 3-manifold into bridge position with respect to a Heegaard surface for the 3-manifold and then untelescoping the Heegaard surface using compressing disks and bridge disks. Coward \cite{C} used Hayashi and Shimokawa's thin position to find an algorithm for determining the bridge number of a hyperbolic knot in $S^3$.

Tomova \cite{T1} strengthened Hayashi and Shimokawa's untelescoping operation by weakening the hypotheses on the sort of disks used in the untelescoping operation: she used so-called ``c-disks''. Tomova \cite{T2} used this new version of thin position in her work on the relationship between multiple bridge surfaces for a given knot, producing a version of the ``alternate Heegaard genus bounds distance'' theorem of Scharlemann and Tomova \cite{STo1}. Scharlemann and Tomova \cite{STo3} also use this version of thin position to show that 2--bridge knots have essentially unique bridge surfaces. Tomova's version of thin position, however, requires that the 3-manifold be boundary-less.

In the present work, we show that (except in a few degenerate situations) a bridge surface for a graph in a 3-manifold, possibly with boundary, can be transformed using untelescoping-like operations into a type of thin position for the graph where the thin surfaces are essential in the graph complement and the thick surfaces are strongly irreducible bridge surfaces in the complement of the thin surfaces. The union of thin and thick surfaces is called a ``multiple Heegaard splitting'' for the graph in the 3-manifold. 

In the thin position of Hayashi-Shimokawa and Tomova the proof that thin surfaces are not parallel to a component of the link relies on the difficult classification of certain bridge surfaces \cite{HS1, HS2}. So in our situation, proving that thin surfaces are essential relies on the classification of bridge surfaces for certain graphs in compressionbodies \cite{TT1}. (See Theorem \ref{thm:HSofCompBodies}.)

Since in the past it has been helpful to be able to untelescope a bridge surface for a knot along c-disks (and not just compressing disks), we prove a relative version of the theorem. The set up is as follows: We have a compact orientable 3-manifold $M$, a Heegaard surface $H \subset M$ and a graph $T \subset M$ in bridge position with respect to $H$. Inside $T$ we specify a subgraph $\Gamma$ that contains no vertices of $T$, and when we untelescope, we will use compressing disks for $H - T$ and so-called ``cut disks'' that intersect $\Gamma$. If $M$ is a closed 3-manifold and $T$ is a link then if $\Gamma$ is chosen to equal $T$, we obtain Tomova's thin position, although our conclusion that the thin surfaces are c-essential is stronger than her conclusion. If $M$ is a 3-manifold and $T$ is a properly embedded 1-manifold, then choosing $\Gamma = \nil$ gives Hayashi and Shimokawa's thin position. We note that, even in this case, if $\boundary M \neq \nil$ our conclusion that the thin surfaces are essential (i.e. incompressible and not boundary parallel) in the exterior of the graph is stronger than Hayashi and Shimokawa's conclusion that the thin surfaces are incompressible in the complement of the graph and not ``$T$-parallel'' to a component of $\boundary M$. 

We prove

\begin{maincor}
Let $M$ be a compact, orientable $3$-manifold containing a properly embedded graph $T$ and let $\Gamma$ be a subgraph of $T$ disjoint from the vertices of $T - \boundary T$. Furthermore, assume that $M-T$ is irreducible and that no sphere in $M$ intersects $T$ exactly once. Suppose $K$ is a bridge surface for $(M, T,\Gamma)$ such that no circle or edge component of $\Gamma$ is isotopically core with respect to $K$.  Then one of the following holds:
\begin{itemize}
\item there is a multiple $\Gamma$-Heegaard splitting $\mc{H}$ for $(M,T, \Gamma)$ so that each thin surface is $T$--essential and each thick surface is $(T,\Gamma)$-c-strongly irreducible in the component of $M - \mc{H}^-$ containing it. Furthermore, $\mc{H}$ is obtained by applying ``untelescoping-like'' operations to $K$.
\item $K$ contains a generalized stabilization.
\item $K$ is perturbed.
\item $K$ has a removable path.
\end{itemize}
\end{maincor}
All of the terms are defined in the next section.

In a forthcoming paper we will demonstrate how this result can be used to show that certain edges in a Heegaard spine can be made level with respect to a bridge surface.

\section{Definitions}

\subsection{Surfaces in $(M,T)$}

Let $T$ be a finite graph. Unless otherwise specified we assume that $T$ has no valence 2 vertices as such vertices can generally be deleted and their adjacent edges amalgamated. $T$ may contain closed loops without vertices. We say that $T$ is {\em properly embedded} in a $3$--manifold $M$ if $T \cap \bdd M$ is the set of all valence $1$ vertices of $T$. We will denote the pair $(M,T)$.

Suppose that $F \subset M$ is a surface such that $\bdd F \subset (\bdd M \cup T)$. Then $F$ is {\em  $T$--compressible} if there exists a compressing disk for $F - T$ in $M - T$. If $F$ is not $T$--compressible, it is {\em  $T$--incompressible}. $F$ is {\em  $T$--$\bdd$--compressible} if there exists a disk $D \subset M - T$ with interior disjoint from $F$ such that $\bdd D$ is the endpoint union of an arc $\gamma$ in $F$ and an arc $\delta$ in $\bdd M$. We require that $\gamma$ not be parallel in $F - T$ to an arc of $\bdd F - T$. If $F$ is not $T$--$\bdd$--compressible, it is {\em  $T$--$\bdd$--incompressible}. Finally suppose $\Gamma$ is some subgraph of $T$.  We will say that $F$ is {\em  $\Gamma$-cut-compressible}, if there exists a disk $D^c$ that only intersects $F$ it its boundary, the boundary is essential in $F-T$, so that $|D^c \cap T|=1$ and that point of intersection is contained in $\Gamma$. We also require that $\boundary D^c$ is not parallel in $F - T$ to a puncture $T \cap F$. We call $D^c$ a {\em $\Gamma$-cut-disk}. A {\em $\Gamma$-c-disk} will be either a $T$-compressing disk or a $\Gamma$-cut-disk. If $\Gamma$ is understood, we refer to it as simply a c-disk. Figure \ref{fig:GammaCbdy} depicts, among other things, a cut disc. A surface $F$ in $M$ is called {\em $T$-parallel} if $F$ is boundary parallel in $M-\inter{\eta} (T)$ and {\em $T$-essential} if it is $T$-incompressible and not $T$-parallel. The surface $F$ will be called $T$-c-essential if it is essential and cut-incompressible.

\subsection{Trivially embedded graphs in compressionbodies}

Let $C$ be a compressionbody and $T$ be a properly embedded graph in $C$. A connected component $\tau$ of $T$ is {\em trivial} in $C$ if it is one of four types (see Figure \ref{fig:GammaCbdy}):
\begin{enumerate}
\item {\em Bridge arc:} a single edge with both endpoints in $\bdd_+ C$ which is parallel to an arc in $\bdd_+ C$. The disk of parallelism is called a {\em bridge disk}. 
\item {\em Vertical edge:} a single edge with one endpoint in $\bdd_+C$ and one endpoint in $\bdd_-C$ that is isotopic to $\{\text{point}\} \times I$.

\item {\em Pod:} a graph with a single vertex in the interior of $C$ and with all valence 1 vertices lying in $\bdd_+C$ so that there is a disk $D$ with $\bdd D \subset \bdd_+C$ inessential in $\bdd_+C$ and so that $\tau \subset D$. The disk $D$ is called a {\em pod disk}. Each of the components of $D-\tau$ will be called a {\em bridge disk} as these components play the same role as bridge disks for bridge arcs. 

\item {\em Vertical pod:} a graph with a single vertex in the interior of $C$ and with one valence 1 vertex lying in $\bdd_-C$ and all other valence 1 vertices in $\bdd_+C$ so that if the edge adjacent to $\bdd_- C$ is removed the resulting graph is a pod and if instead all but one of the edges adjacent to $\bdd_+C$ are removed, the result is a vertical edge with a valence 2 vertex in its interior. The edges that have one endpoint in $\bdd_+C$ are called {\em pod legs} and the other edge is called a {\em pod handle}. 
 \end{enumerate}
 
 If all components of $T$ are simultaneously trivially embedded, then we say that $T$ is {\em trivially embedded} in $C$.

\subsection{Trivially embedded graphs in $\Gamma$-compressionbodies}

\begin{defin} Let $C$ be a compressionbody containing a properly embedded graph $T$ and let $\Gamma$ be a subgraph of $T$. Suppose that there is a collection of $(T,\Gamma)$-cut-disks, $\mathcal{D}^c$, such that:
\begin{enumerate}
\item for each edge of $\Gamma$ there is at most one disk in $\mathcal{D}^c$ intersecting it,
\item each edge of $\Gamma$ intersected by $\mathcal{D}^c$ has both endpoints on $\boundary_- C$,
\item boundary reducing $C$ along all cut-disks $\mathcal{D}^c$ produces a union of compressionbodies $C_1,..,C_n$,
\item for each $i$ the graph $C_i \cap T$ is trivially embedded in $C_i$.

\end{enumerate}

Then we call the triple of the compressionbody and the graphs a {\em $\Gamma$-compressionbody containing a trivially embedded graph $T-\Gamma$} and denote it $(C,T,\Gamma)$. 

\end{defin}

Note that if $\Gamma=\emptyset$ then $(C,T,\Gamma)=(C,T)$ is a compressionbody containing a trivially embedded graph. 

If $(C,T,\Gamma)$ is a $\Gamma$-compressionbody and $\gamma$ is an arc of $\Gamma$ in $(C,T,\Gamma)$ that has both of its endpoint in $\bdd_-C$ and contains no vertices, then $\gamma$ is called a {\em ghost arc}. Each ghost arc intersects a $\Gamma$-cut disk in $C$.

\begin{figure}[tbh]
\centering
\includegraphics[scale=0.4]{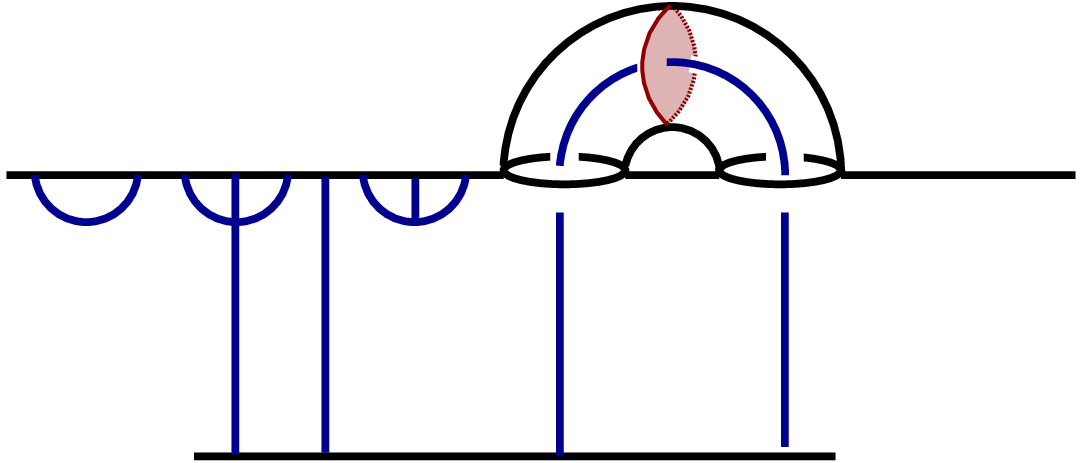}
\caption{The five types of components in a $\Gamma$-compressionbody. From left to right we have: bridge arc, vertical pod, vertical arc, pod, ghost arc. The red disc is a cut disc for the ghost arc.} \label{fig:GammaCbdy}
\end{figure}

\subsection{Heegaard surfaces and $\Gamma$-Heegaard surfaces}\label{subsec:def}
Let $(M,T)$ be a (3-manifold, graph) pair. A {\em  Heegaard splitting} for $(M,T)$ is a decomposition of $M$ into two compressionbodies, $C_1$ and $C_2$, such that $T_i = T \cap C_i$ is trivially embedded in $C_i$ for $i \in \{1,2\}$. The surface $H=\bdd_+C_1=\bdd_+C_2$ is called a {\em Heegaard surface} for $(M,T)$. We will also say that $T$ is in {\em  bridge position} with respect to $H$ and that $H$ is a {\em  bridge splitting} of $(M,T)$. 

Suppose now that $\Gamma$ is a subgraph of $T$ and $H$ is a surface in $(M,T)$ transverse to $T$ so that $H$ splits $M$ into compressionbodies $C_1$ and $C_2$ such that $(C_i,T_i,\Gamma_i)$ is a $\Gamma_i$-compressionbody containing a trivially embedded graph $T_i-\Gamma_i$ for $i \in \{1,2\}$. In this case we say $H$ is {\em $\Gamma$-Heegaard surface} or a {\em $\Gamma$-bridge surface} for $(M,T,\Gamma)$. If $\Gamma=\emptyset$ then $H$ is simply a Heegaard surface.

Suppose $H$ is a $\Gamma$-Heegaard surface for $(M, T,\Gamma)$ splitting it into triples $(C_1, T_1,\Gamma_1)$ and $(C_2, T_2,\Gamma_2)$. We will say that $H$ is {\em $T$-reducible} if there exists a sphere $S$ disjoint from $T$ such that $S \cap H$ is a single curve essential in $H-T$, otherwise $H$ is {\em $T$-irreducible}. We will say that $H$ is {\em $T$-weakly reducible} if $H$ has $T$-compressing disks on opposite sides with disjoint boundaries. Otherwise $H$ is said to be {\em $T$-strongly irreducible}. We will say that $H$ is {\em $(T,\Gamma)$-c-weakly reducible} if $H$ has $\Gamma$-c-disks on opposite sides with disjoint boundaries. Otherwise $H$ is said to be {\em $(T,\Gamma)$-c-strongly irreducible}. 

\subsection{Multiple bridge splittings}

To prove our main theorem we will extend to graphs the definition of multiple bridge splittings for the pair (3-manifold, 1-manifold) introduced by Hayashi and Shimokawa in \cite{HS2} and generalized by Tomova in \cite{T1}. 

\begin{defin}
Suppose $M$ is a 3-manifold containing a properly embedded graph $T$ and let $\Gamma$ be a subgraph of $T$. A disjoint union of surfaces $\mathcal{H}$ is a multiple $\Gamma$-Heegaard splitting for $(M,T,\Gamma)$ if:
\begin{itemize}
\item the closure of each component of $(M, T)-\mathcal{H}$ is a $\Gamma$-compressionbody $C_i$ containing a trivially embedded graph $T_i=(T-\Gamma)\cap C_i$,
\item for each $i$, $\bdd_+C_i$ is attached to some $\bdd_+C_j$ and each component of $\bdd_-C_i$ is either contained in $\bdd M$ or is attached to a component of some $\bdd_-C_k$ with $i \neq k$. 
\end{itemize}
Let $\mathcal{H}^+=\cup \bdd_+C_i$ and $\mathcal{H}^-=\cup \bdd_-C_i$. The components of $\mc{H}^+$ and $\mc{H}^-$ are called {\em thick} and {\em thin} surfaces respectively.
\end{defin} 

Note that in our definition each thick surface is separating in the complement of the thin surfaces.

\subsection{Generalized stabilizations, perturbations and removable paths}

Several geometric operations can be used to produce new $\Gamma$-bridge surfaces from old ones. These are generalizations of stabilizations for Heegaard splittings of manifolds and usually we work with bridge surfaces that are not obtained from others via these operations. A more detailed discussion of these operations can be found in \cite{HS1,STo3,TT1}. A $\Gamma$-bridge surface $H$ for $(M,T, \Gamma)$ will be called {\em stabilized} if there is a pair of $T$-compressing disks on opposite sides of $H$ that intersect in a single point. The $\Gamma$-bridge surface is {\em meridionally stabilized} if there is a $(T, \Gamma)$-cut-disk and a $T$-compressing disk on opposite sides of $H$ that intersect in a single point.

As we are considering manifolds with boundary there are two other geometric operations that can be used to obtain a new bridge surface from an old one. Suppose $H$ is a $\Gamma$-Heegaard splitting for $(M,T, \Gamma)$ decomposing $M$ into compressionbodies $C_1$ and $C_2$. Let $F$ be a component of $\bdd_- C_1 \subset \bdd M$ and let $T'$ be a collection of vertical edges in $F \times [-1,0]$ so that $T' \cap (F \times \{0\}) = T \cap F$. Let $H'$ be a minimal genus Heegaard surface for $(F \times [-1,0], T')$ which does not separate $F \times \{-1\}$ and $F \times \{0\}$ and which intersects each edge in $T'$ exactly twice. $H'$ can be formed by tubing two parallel copies of $F$ along a vertical arc not in $T'$. We can form a $\Gamma$-Heegaard surface $H''$ for $M \cup (F \times [-1,0])$ by {\em amalgamating} $H$ and $H'$. This is simply the usual notion of amalgamation of Heegaard splittings (see \cite{Sc}). In fact, $H''$ is a $\Gamma$-Heegaard surface for $(M \cup (F \times [-1,0]), T \cup T')$. Since $(M \cup (F \times [-1,0]), T \cup T')$ is homeomorphic to $(M,T)$, we may consider $H''$ to be a $\Gamma$-Heegaard surface for $(M,T,\Gamma)$. $H''$ is called a {\em boundary stabilization} of $H$. A similar construction can be used to obtain a new $\Gamma$-Heegaard splitting of $(M,T,\Gamma)$ by tubing two parallel copies of $F$ along a vertical arc that does lie in $T'\cap \Gamma$. In this case $H''$ will be called {\em meridionally boundary stabilized}.

If a $\Gamma$-bridge surface is stabilized, boundary stabilized, meridionally stabilized, or meridionally boundary stabilized  we will say that it contains a {\em generalized stabilization}.

A $\Gamma$-bridge surface is called {\em cancellable} if there is a pair of 
bridge disks $D_i$ on opposite sides of $H$ such that $ \emptyset \neq (\bdd D_1 \cap \bdd D_2) \subset (H \cap T)$. If $|\bdd D_1 \cap \bdd D_2|=1$ we will call the bridge surface {\em perturbed}. Unlike the case when $T$ is a 1-manifold, a perturbed bridge surface cannot necessarily be unperturbed by an isotopy as that may result in edges that are no longer trivially embedded. The pair $\{D_1,D_2\}$ is called the {\em associated cancelling pair of disks}.

Suppose that $\zeta \subset T$ is a 1--manifold which is the the union of edges in $T$ (possibly a closed loop with zero or more vertices of $T$). We say that $\zeta$ is a {\em removable path} if the following hold:
\begin{enumerate}
\item Either the endpoints of $\zeta$ lie in $\bdd M$ or $\zeta$ is a cycle in $T$.

\item $\zeta$ intersects $H$ exactly twice 

\item If $\zeta$ is a cycle, there exists a cancelling pair of disks $\{D_1,D_2\}$ for $\zeta$ with $D_j \subset C_j$. Furthermore there exists a compressing disk $E$ for $H$ such that $|E \cap T| = 1$ and if $E \subset C_j$ then $|\bdd E \cap \bdd D_{j+1}| = 1$ (indices run mod 2) and $E$ is otherwise disjoint from a complete collection of bridge disks for $T - H$ containing $D_1 \cup D_2$. 

\item If the endpoints of $\zeta$ lie in $\bdd M$, there exists a bridge disk $D$ for the bridge arc component of $\zeta - H$ such that $D - T$ is disjoint from a complete collection of bridge disks $\Delta$ for $T - H$. Furthermore, there exists a compressing disk $E$ for $H$ on the opposite side of $H$ from $D$ such that $|E \cap D| = 1$ and $E$ is disjoint from $\Delta$.
\end{enumerate}
See Figure \ref{fig:removablepath} for an example of a removable path.

\begin{figure}[tbh]
\centering
\includegraphics[scale=0.4]{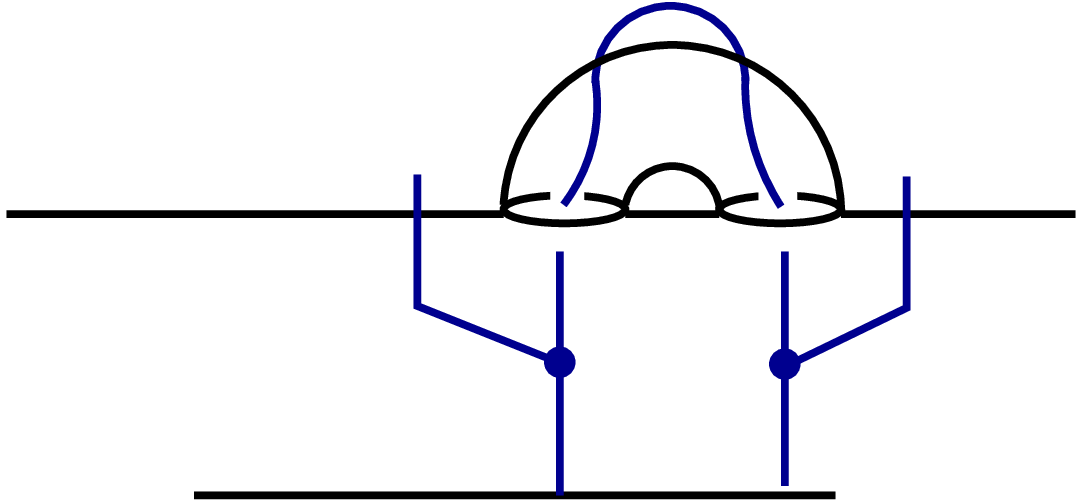}
\caption{An example of a removable path.} \label{fig:removablepath}
\end{figure}

If $T$ has a removable path $\zeta$, $\zeta$ can be isotoped to lie in a spine for one of the compressionbodies $C_1$ or $C_2$. A condition weaker than being removable is the following: A circle or edge component $\gamma$ of $T$ is {\em isotopically core} if it can be isotoped in $M - (T - \gamma)$ to lie in the 1--dimensional portion of a spine of one of the compressionbodies $C_1$ or $C_2$. (Recall that the spine of a compressionbody is the union of the negative boundary of the compressionbody with a 1--complex such that the exterior of the spine is a product region.)

\subsection{$\Gamma$-c-Heegaard surfaces and removable edges}
This section presents a technical result which sometimes allows a $\Gamma$-c-Heegaard surface to be converted into a Heegaard surface with removable edges. 

Suppose that $H$ is a $\Gamma$-Heegaard surface for $(M,T, \Gamma)$ and that $e \subset \Gamma$ is an edge disjoint from $H$ with both endpoints in $\boundary M$. Let $E$ be a cut disk intersecting $e$ whose boundary is in $H$. By isotoping $e$ so that $e \cap E$ moves through $\boundary E$ to the other side, we convert $e$ into a removable path $e'$. Let $T'$ be the new graph. Let $H'$ be the new $\Gamma$-Heegaard surface for $(M,T')$. Let $D$ be the bridge disk for the bridge arc component of $e' - H'$. See Figure \ref{fig:TechnicalResult}.

\begin{figure}[tbh]
\labellist \small\hair 2pt 
\pinlabel {$\boundary M$} [b] at 132 10
\pinlabel {$e$} [r] at 77 61
\pinlabel {$e'$} [r] at 437 61
\pinlabel {$E$} [l] at 137 176
\pinlabel {$H$} [l] at 265 139
\pinlabel {$H'$} [l] at 623 139
\pinlabel {$D$} at 494 236
\endlabellist 
\centering 
\includegraphics[scale=0.5]{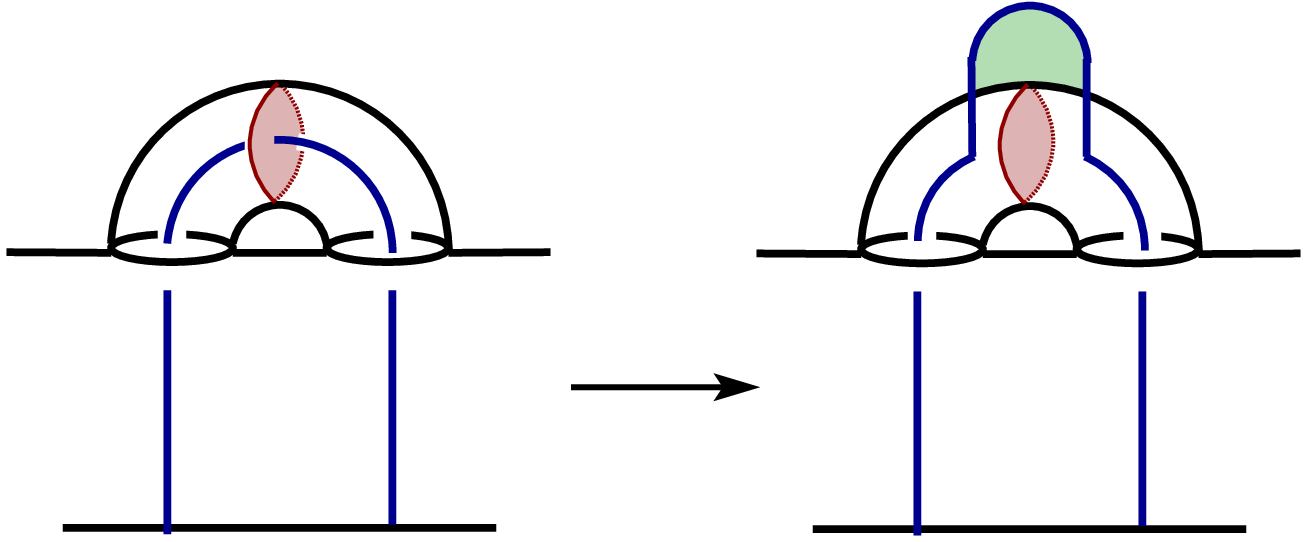}
\caption{The conversion of a ghost edge into a removable edge.} \label{fig:TechnicalResult}
\end{figure}

\begin{lemma}\label{lem: creating removable edges}
If $H'$ is stabilized or meridionally stabilized then so is $H$. If $H'$ is boundary-stabilized or meridionally boundary stabilized, so is $H$ and the stabilization is along the same component of $\bdd M$. If $H'$ is perturbed then so is $H$. If $T'$ contains a removable path other than $e'$ then either that path is removable in $T$ or $H$ is meridionally stabilized. 
\end{lemma}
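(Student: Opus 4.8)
The plan is to exploit that $H$ and $H'$ are literally the same surface and that $T$ and $T'$ agree outside a ball $B = \eta(E)$, inside which $e$ has been pushed across $\bdd E$. Inside $B$ the modification replaces the single unknotted strand $e \cap B$ by $e' \cap B$, which now crosses $H$ in two points $q_1, q_2$ straddling the circle $\bdd E$ and runs through $C_2$ as the bridge arc with bridge disk $D$; meanwhile the old cut disk $E$ becomes an honest $T'$-compressing disk in $C_1$ meeting $D$ in one point, and is exactly the compressing disk exhibited in the removable-path data for $e'$. I would prove each sentence by taking the disks witnessing the relevant property of $H'$, making them transverse to $\bdd B$, using innermost-disk and outermost-arc exchanges along $D$ and $E$ to clear inessential intersections with $B$, and then reversing the supporting isotopy to reinterpret them as disks relative to $T$.

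First I would dispatch the compressing-disk properties (stabilized and meridionally stabilized, and likewise boundary and meridionally boundary stabilized). A $T'$-compressing disk is disjoint from all of $T'$, hence from the finger and from $q_1, q_2$; a cut disk for $H'$ meets the cut-graph $\Gamma' = \Gamma \setminus \{e\}$, which no longer contains $e'$, so it too misses the finger and $q_1, q_2$. In each case the witnessing pair can be pushed off $B$ and, upon reversing the isotopy, becomes a pair of the same type relative to $T$ with the same geometric intersection on $H$, giving the corresponding stabilization of $H$. For the two boundary-stabilized conclusions I would additionally note that the configuration certifying the boundary stabilization can be taken in a collar of the relevant component $F \subseteq \bdd M$, which is disjoint from the interior ball $B$, so $F$ is unchanged.

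For the perturbed case the point is that the finger cannot itself create the perturbation. The edge $e'$ meets $C_1$ in two vertical edges and meets $C_2$ only in the finger, so the finger is the \emph{unique} bridge arc of $e'$ and its endpoints $q_1, q_2$ are not endpoints of any other bridge arc of $T'$. Hence no bridge disk on the $C_1$ side can meet the finger's bridge disk $D$ in a point of $T' \cap H'$, and a perturbing pair for $H'$ must consist of bridge disks for old bridge arcs; after clearing their interiors off $B$ and reversing the isotopy, they perturb $H$.

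The hard part will be the last sentence. Given a removable path $\zeta' \neq e'$ in $T'$, I would take its cancelling (or bridge) disks together with its removable-path compressing disk and make all of this transverse to $B$, to $D$, and to $E$. If $\zeta'$ and all its data can be cleared off $B$, then they lie where $T = T'$ and $H = H'$, so reversing the isotopy shows $\zeta'$ is removable in $T$. The obstruction is a residual essential intersection of the data with the finger region; the structure of the removable-path definition should force any surviving intersection to present the cut disk $E \subset C_1$ opposite a $T$-compressing disk obtained from $\zeta'$'s compressing disk (or from $D$) in $C_2$, meeting it in a single point, and this configuration is exactly a meridional stabilization of $H$. Carefully enumerating the ways $\zeta'$'s disks can run over $q_1, q_2$, the finger, $D$, and $E$, in both the cycle and arc cases of the definition, ruling out the clearable intersections and converting the unclearable one into this cut-disk/compressing-disk pair, is the technical heart of the argument.
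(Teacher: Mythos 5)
Your skeleton is the same as the paper's (transfer the witnessing disks across the supporting isotopy of $e'$ back to $e$, with the removable-path case degenerating to a meridional stabilization), but two of your disjointness claims are false, and they hide exactly the work the paper has to do. First, a cut disk for $H'$ need \emph{not} miss $e'$: the subgraph $\Gamma$ is unchanged by the isotopy, so $e'$ is still an edge of $\Gamma$ and a meridional stabilization of $H'$ may be witnessed by a cut disk meeting $e'$ once; your appeal to a cut-graph ``$\Gamma \setminus \{e\}$'' is an unwarranted redefinition, and accordingly the paper's Claim in Case 1 proves only $|(D_1 \cup D_2) \cap e'| \leq 1$, not disjointness. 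Second, even a disk disjoint from all of $T'$ cannot in general be ``pushed off $B$'': a witnessing compressing disk may be parallel to $E$ (then after reversing the isotopy it meets $e$ once, so a stabilization of $H'$ can become a meridional stabilization of $H$ --- consistent with the lemma but contradicting your ``pair of the same type''), or it may meet $E$ in arcs whose endpoints are essential intersections of its boundary with $\bdd E$ in $H$; such arcs are not removable by innermost-disk/outermost-arc exchanges. The paper's mechanism is the reverse of yours: after minimizing $|(D_1 \cup D_2) \cap (D \cup E)|$ to kill circles, one isotopes $D \cap E$ off $D_2 \cap E$ and then guides the isotopy of $e'$ back to $e$ along $D$, so that the \emph{track of the isotopy} avoids $D_2$ --- one clears the isotopy off the disks, not the disks off the ball. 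The same objection defeats your boundary-stabilization paragraph: the c-disk $D'$ certifying a (meridional) boundary stabilization is a disk whose compression splits off a product region; it is not supported in a collar of the boundary component and may intersect $E$, and the paper handles it by the same guided-isotopy trick.

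For the fourth assertion you correctly guess the failure mode (the pair consisting of the cut disk $E$ and a compressing disk on the $D$ side meeting it once), but you explicitly defer the ``technical heart,'' and that is where the proof lives. The paper isolates the unique bad subcase: the transfer works unless $\zeta$ is a non-cycle path whose compressing disk from condition (4) of the definition of removable path is this same disk $E$. In that subcase, condition (4) supplies a complete collection of bridge disks disjoint from $E$, hence a bridge disk $D'$ for the finger $e' - H'$ disjoint from $E$; a small isotopy of $D \cup D'$ yields a compressing disk $E'$ for $H' - T'$ with $|\bdd E \cap \bdd E'| = 1$, and since $E'$ lies on the opposite side from $e$ and is disjoint from $T$ while $E$ meets $T$ exactly once in $e$ after the reverse isotopy, the pair $(E, E')$ is a meridional stabilization of $H$. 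Without identifying this subcase and constructing $E'$ from $D \cup D'$, your outline is a plan rather than a proof. (Your perturbed case, by contrast, is essentially the paper's argument --- the shared boundary point cannot be an endpoint of the finger because the adjacent components of $e' - H'$ in $C_1$ are vertical arcs --- except that clearing the disk interiors off the finger region again requires the guided-isotopy trick rather than a general position exchange.)
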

\begin{proof}
\textbf{Case 1:} Suppose that $H'$ is stabilized or meridionally stabilized by disks $D_1$ and $D_2$ on opposite sides of $H'$ which intersect once. Out of all such pairs of stabilizing or meridionally stabilizing pairs, choose $D_1$ and $D_2$ so that $|(D_1 \cup D_2) \cap (D \cup E)|$ is minimal. The next claim shows that $(D_1,D_2)$ also stabilizes or meridionally stabilizes $H$.

\textbf{Claim:} $|(D_1 \cup D_2) \cap e| \leq 1$.

Since $(D_1,D_2)$ (meridionally) stabilize $H'$, $|(D_1 \cup D_2) \cap e'| \leq 1$. Without loss of generality, choose the labelling so that $D_1$ is on the same side of $H$ as $D$. Then $D_1 \cap e = \nil$, and so $(D_1 \cup D_2) \cap e = D_2 \cap e$. If $D_2$ is disjoint from and not parallel to $E$, then clearly $|e \cap D_2| \leq 1$. If $D_2$ is parallel to $E$, then $|D_2 \cap e| = |E \cap e| = 1$. If $D_2$ is not disjoint from $E$, then by the minimality of $|(D_1 \cup D_2) \cap (E \cup D)|$ it follows that $D_2 \cap E$ is a collection of arcs. After possibly a small isotopy, we may assume that $D \cap E$ is disjoint from $D_2 \cap E$. Then using $D$ to isotope $e'$ back to $e$ guarantees that $e$ is disjoint from $D_2$. \qed(Claim)

\textbf{Case 2:} Suppose that $H'$ is boundary stabilized or meridionally boundary stabilized. In this case there exists a $\Gamma$-c-disk $D'$ for $H$ such that compressing $H'$ along $D'$ produces surfaces $H_1$ and $H_2$ and the surface $H_2$ bounds a product region with $\boundary M$ containing only vertical arcs of $T$ while the surface $H_1$ is a $\Gamma$-Heegaard surface for $(M,T, \Gamma)$. 

If $D'$ is disjoint from or parallel to $E$ then it is clear that $H$ is boundary stabilized or meridionally boundary stabilized. Suppose, therefore that $D'$ intersects $E$. We may assume that $D'$ was chosen so as to minimize $D' \cap E$. This implies that $D' \cap E$ is a non-empty collection of arcs. Then, as in Case 1, isotoping $e'$ back to $e$ shows that $e \cap D' = \nil$. Hence, $H$ is boundary stabilized or meridionally boundary stabilized and the stabilization is along the same component as that of $H'$.

\textbf{Case 3:} Suppose that $(D_1,D_2)$ are a perturbing pair of disks for $H'$ such that $D_1$ is on the same side of $H$ as $D$. Notice that $\boundary D_1 \cup \boundary D_2$ is disjoint from $e'$ since two components of $e' - H'$ are vertical arcs in the compressionbody containing them. Thus, unless $e \cap D_2 \neq \nil$, $(D_1, D_2)$ is a perturbing pair for $H$. If $D_2 \cap e \neq \nil$, then $D_2$ intersects the neighborhood of $E$ used to push $e$ to $e'$. An argument similar to that of Cases 1 and 2 shows that $\boundary D$ can be assumed to be disjoint from $\boundary D_2$, and so $e$ is disjoint from $D_2$, as desired.

\textbf{Case 4:} Suppose that $T'$ contains a removable path $\zeta \neq e$. An argument similar to the previous cases shows that $\zeta$ is a removable path in $T$, unless $\zeta$ is not a cycle and the compressing disk $E$ from condition (4) of the definition of removable path is equal to the present disk $E$. Suppose, therefore, that this is the case. By the definition of removable path, there is a bridge disk $D'$ for $e' - H'$ which is disjoint from $E$. A small isotopy of $D \cup D'$ creates a compressing disk $E'$ for $H' - T'$ intersected once by $E$. The pair $(E,E')$ therefore, shows that $H'$ is stabilized. Since $\boundary E'$ is non-separating on $H'$ and therefore on $H$, $E'$ is a compressing disk for $H$ disjoint from $T$. The boundary of $E'$ intersects $E$ exactly once and $E \cap T = E \cap e$ and so $H$ is meridionally stabilized.
\end{proof}

\section{Properties of compressionbodies containing properly embedded graphs}

In this section we will generalize many of the well known results for compressionbodies to the case when the compressionbody contains a graph embedded in a specific way.

\begin{lemma}\label{lem:compressingcompressionbodies}
Suppose $C$ is a $\Gamma$-compressionbody containing a trivially embedded graph $T-\Gamma$. Compressing or cut-compressing $C$ results in a union of $\Gamma$-compressionbodies each containing a trivially embedded graph. 

\end{lemma}
\begin{proof}
Given any $\Gamma$-c-disk $D^c$ for $(C,T,\Gamma)$ we can always find a collection of pairwise disjoint $\Gamma$-c-disks $\mathcal{D}^c$ containing $D^c$ so that $(C,T,\Gamma)$ c-compressed along $\mathcal{D}^c$ is a collection $3$-balls and components homeomorphic to $G \times I$ where $G$ is a component of $\bdd_-C$. Both types of components may contain trivially embedded graphs. The result follows. 
\end{proof}

The next lemma shows that the negative boundary of a $\Gamma$-compressionbody is $\Gamma$-c-incompressible.

\begin{lemma} \label{lem:negativeboundaryincomp}
Let $(C,T,\Gamma)$ be a $\Gamma$-compressionbody containing a trivially embedded graph $T-\Gamma$. Assume that no component of $\boundary_- C$ is a 2--sphere intersecting $T$ one or fewer times. Then $C - T$ is irreducible, no sphere in $C$ intersects $T$ exactly once, and $\bdd_- C$ is $\Gamma$-c-incompressible.
\end{lemma}
\begin{proof}
Suppose that $S \subset C$ is either a sphere intersecting $T$ transversally one or fewer times and which does not bound a ball in $C - T$ or a $\Gamma$-compressing disk for $\boundary_- C$. An innermost disk argument shows that we may choose $S$ so that it is disjoint from all the $\Gamma$-c-disks for $\boundary_+ C$. Let $\Delta$ be a maximal collection of $\Gamma$-c-disks for $\boundary_+ C$, chosen so that each disk of $\Delta$ intersecting $T$ intersects a unique ghost arc of $T$ exactly once and so that each ghost arc of $T$ intersects $\Delta$ exactly once. Boundary-reducing $C$ along $\Delta$ creates a $\Gamma$-compressionbody $(C',T',\Gamma')$ that is possibly disconnected. $C'$ is the union of trivial compressionbodies and $S \subset C'$. No component of $T'$ is a ghost arc since every ghost arc intersects a disk in $\Delta$. In a trivial compressionbody, there are no essential spheres or disks. Hence, if $S$ is a sphere, it either bounds a 3--ball in $C'$ or is boundary parallel in $C'$.

If $S$ is a sphere bounding a 3--ball in $C'$, then since every component of $T'$ is adjacent to $\boundary_+ C'$, $S$ bounds a 3--ball in $C' - T'$. Hence if $S$ bounds a 3--ball in $C'$ it also bounds one in $C - T$, a contradiction. Thus, if $S$ is a sphere, then it is in a component $C''$ of $C'$ homeomorphic to $S^2 \times [0,1]$ and is boundary-parallel. At least two components of $T \cap C''$ are adjacent to $\boundary_- C \cap C''$ and these components are also adjacent to $\boundary C'' - \boundary_- C$. Hence these components also intersect $S$ and so $S$ intersects $T$ twice, a contradiction. 

If $S$ is a disk, then $\boundary S$ bounds a disk $E \subset \boundary_- C = \boundary_- C'$. If $S$ is a compressing disk, then $E$ is adjacent to at least one component of $T'$ and if $S$ is a cut disk, then $E$ is adjacent to at least two components of $T'$. The union $S \cup E$ is a sphere in $C'$ which either bounds a 3--ball in $C'$ or is boundary-parallel. If the sphere bounds a 3-ball, then each intersection point of $T' \cap E$ is joined in $T'$ to an intersection point of $T' \cap S$, implying that $S$ intersects $T'$ at least once if it is a compressing disk and at least twice if it is a cut disk. This contradicts the definition of compressing disk and cut disk, respectively. Suppose the sphere is parallel to a boundary component $P$ of $C'$.  If $P \subset \boundary_+ C'$, then $E$ is a subset of a sphere component $P'$ of $\boundary_- C'$. The disk $E' = P - E$ must intersect $T'$ once or twice (corresponding to $S$ being a compressing or cut disk). Since the positive boundary of a connected compressionbody is connected, the sphere $S \cup E'$ bounds a 3--ball in $C'$. This gives rise to a contradiction as before. Hence, $P \subset \boundary_- C'$. Since no component of $T'$ is adjacent to two components of $\boundary_- C'$, each intersection point of $T' \cap E$ corresponds to an intersection point of $T' \cap S$. Hence, $D$ can be neither a compressing or a cut disk, a contradiction.
\end{proof}

The next lemma does not concern bridge surfaces \textit{per se}, but will be useful in the proof of the main theorem when we try to show that the thin surfaces are not $T$-parallel. We use it to guarantee that if there is a thin surface that is not $T$-parallel, then we can find an innermost such surface.

\begin{lemma}\label{lem:essential}
Suppose that $C$ is a compressionbody with $F = \boundary_+ C$. Let $T \subset C$ be a properly embedded graph such that $F - \inter{\eta}(T)$ is parallel to $\boundary (C - \inter{\eta}(T)) - F$. Let $G \subset C$ be a $T$-incompressible surface disjoint from $F$ and transverse to $T$. Then each component of  $G - \inter{\eta}(T)$ is parallel to  $\boundary (C - \inter{\eta}(T)) - F$. 
\end{lemma}

\begin{proof}
Let $N = C - \inter{\eta}(T)$. Then $N$ is homeomorphic to $(F - \inter{\eta}(T)) \times I$ with the homeomorphism taking $F - \inter{\eta}(T)$ to $(F - \inter{\eta}(T)) \times \{0\}$. The boundary of $G- \inter{\eta}(T)$ is contained in $(F - \inter{\eta}(T)) \times \{1\}$. Since $G$ is $T$-incompressible, by \cite[Corollary 3.2]{W2}, each component of  $G - \inter{\eta}(T)$ is parallel to $(F - \inter{\eta}(T)) \times \{1\}$. Hence, each component of  $G - \inter{\eta}(T)$ is parallel to  $\boundary (C - \inter{\eta}(T)) - F$.
\end{proof}
  
Finally we recall the classification of Heegaard splittings of pairs $(M,T)$ where $M$ is a compressionbody and $\boundary_+ M - \inter{\eta}(T)$ is parallel to $\boundary (M - \inter{\eta}(T)) - \boundary_+ M$, \cite{TT1}. This result is key to proving our main theorem. 

\begin{thm}\cite[Theorem 3.1]{TT1} \label{thm:HSofCompBodies}
Let $M$ be a compressionbody and $T$ be a properly embedded graph so that $\boundary_+ M - \inter{\eta}(T)$ is parallel to $\boundary (M - \inter{\eta}(T)) - \boundary_+ M$. Let $H$ be a Heegaard surface for $(M,T)$. Assume that $T$ contains at least one edge. Then one of the following occurs:
\begin{enumerate}
\item $H$ is stabilized;
\item $H$ is boundary stabilized;
\item $H$ is perturbed;
\item $T$ has a removable path disjoint from $\boundary_+ M$; 
\item $M$ is a 3--ball, $T$ is a tree with a single interior vertex (possibly of valence 2), and $H - \inter{\eta}(T)$ is parallel to $\bdd M - \inter{\eta}(T)$ in $M - \inter{\eta}(T)$;
\item $M = \bdd_- M \times I$, $H$ is isotopic in $M - \inter{\eta}(T)$ to $\bdd_+ M - \inter{\eta}(T)$.
\end{enumerate}
\end{thm}

For our purposes we need to strengthen the second conclusion of the above theorem:

\begin{thm} \label{thm:HSofCompBodies2} Let $M$ be a compressionbody and $T$ be a properly embedded graph so that $\boundary_+ M - \inter{\eta}(T)$ is parallel to $\boundary (M - \inter{\eta}(T)) - \boundary_+ M$. Let $H$ be a Heegaard surface for $(M,T)$. Assume that $T$ contains at least one edge. Then one of the following occurs:
\begin{enumerate}
\item $H$ is stabilized;
\item $H$ is boundary stabilized along $\bdd_-M$;
\item $H$ is perturbed;
\item $T$ has a removable path disjoint from $\boundary_+ M$; 
\item $M$ is a 3--ball, $T$ is a tree with a single interior vertex (possibly of valence 2), and $H - \inter{\eta}(T)$ is parallel to $\bdd M - \inter{\eta}(T)$ in $M - \inter{\eta}(T)$;
\item $M = \bdd_- M \times I$, $H$ is isotopic in $M - \inter{\eta}(T)$ to $\bdd_+ M - \inter{\eta}(T)$.
\end{enumerate}

\end{thm}
\begin{proof}
Suppose $H$ is boundary stabilized along $\bdd_+M$. Then $H$ is obtained by amalgamating a minimal genus Heegaard surface for $\bdd_+M \times [-1,0]$ which does not separate $\bdd_+M \times \{-1\}$ and $\bdd_+M \times \{0\}$ and which intersects each edge in $T\cap (\bdd_+M \times [-1,0])$ exactly twice, together with a Heegaard surface $\tilde{H}$ for $M$. Without loss of generality we will assume that $H$ is obtained from $\tilde{H}$ after a single boundary stabilization along $\bdd_+M$.

By Theorem \ref{thm:HSofCompBodies}, $\tilde{H}$ satisfies one of six possible conclusions. If $\tilde{H}$ is stabilized, perturbed or if $T$ has a removable path disjoint from $\boundary_+ M$ then the same is true for $H$ as boundary stabilizations preserve all of these properties. If $\tilde{H}$ is boundary stabilized, the stabilization must be along $\bdd_-M$, and so the same is true for $H$. 

Suppose that $M$ is a 3--ball, $T$ is a tree with a single interior vertex, and $H - \inter{\eta}(T)$ is parallel to $\bdd M - \inter{\eta}(T)$ in $M - \inter{\eta}(T)$. Let $A$ and $B$ be the components of $M-\tilde{H}$ so that $B$ is a ball and let $\kappa$ be one of the edges of $T \cap M$. Then $H$ can be recovered from $\tilde{H}$ by tubing $H$ to the boundary of a collar of $\bdd M$ along a vertical tube $\tau$ in $A$. We can choose $\tau$ to be arbitrarily close to $\kappa \cap A$; in particular, we may assume that the disk of parallelism between $\tau$ and $\kappa$ intersects some bridge disk that contains $\kappa \cap B$ in its boundary only in the point $\kappa \cap \tilde{H}$. We conclude that $H$ is perturbed.

Suppose then that $M= \bdd_- M \times I$ and $H$ is isotopic in $M - \inter{\eta}(T)$ to $\bdd_+ M - \inter{\eta}(T)$. Let $A$ and $B$ be the components of $M-\tilde{H}$ so that $A$ contains $\bdd_+M$. The argument in this case is identical to the one above as long as there is at least one bridge disk in $B$. If $T \cap B$ is a product, then $T \cap M$ is a collection of vertical arcs and thus $H$ can be obtained from the Heegaard surface $\tilde{H}$ by stabilizing along $\bdd_-M$. 
\end{proof}

\section{Haken's lemma}  
  
  Let $H$ be a $\Gamma$-bridge surface for $(M,T, \Gamma)$ and suppose $D$ is a $T$-compressing disk for some component $G$ of $\bdd M$. It is a classic result of Haken \cite{Hk} that in the case $T=\emptyset$, there is a compressing disk $D'$ for $G$ so that $D'$ intersects $H$ in a unique essential curve. This result was extended to the case where $T$ is a 1-manifold and $\Gamma=\emptyset$ in \cite{HS2} and to the case where $T$ is a 1-manifold and $\Gamma=T$ in \cite{T1}. This result implies that, in most cases, if $\bdd M$ has a compressing disk, then $H$ is weakly reducible. 
  
  Here we will use a generalization of this result to cut-disks. We are grateful to Jesse Johnson for showing us a proof of the following theorem.
  
    \begin{thm} \label{thm:one}
Suppose $M$ is a compact orientable manifold, $T$ is a properly embedded graph in $M$ and $\Gamma$ is a subgraph of $T$. Assume that $M - T$ is irreducible and that no sphere in $M$ intersects $T$ exactly once transversally. Suppose $H$ is a $\Gamma$-Heegaard splitting for $(M,T,\Gamma)$ such that there is a $\Gamma$-c-disk for $H$ on each side. If there exists a $\Gamma$-c-disk for some component of $\bdd M$ then $H$ is $\Gamma$-c-weakly reducible. 
   \end{thm}   
  
  \begin{proof}
Let $D^*$ be a $\Gamma$-c disk for $\bdd M$.

By Lemma \ref{lem:negativeboundaryincomp}, the negative boundary of a $\Gamma$-c-compressionbody is $(T, \Gamma)$-c-incomressible so $D^*$ must intersect $H$.  Let $D_a$ and $D_b$ be c-disks for $H$ above and below it. Let $H_a$ be isotopic to $H$ but with the c-disk $D_a$ shrunk to a small disk neighborhood of a point. In particular in $H_a$ we may assume that $D_a$ is disjoint from $D^*$. Furthermore we may assume that $H_a$ is isotoped so that no curve of $D^* \cap H_a$ bounds a disk in $H_a - T$. As $H_a$ and $D^*$ must intersect, an innermost in $D^*$ circle of intersection bounds a $\Gamma$-c-disk $\Delta_a$ for $H_a$. The disk $\Delta_a$ is disjoint from $D_a$ and therefore either we are done, or $\Delta_a$ is on the same side of $H_a$ as $D_a$. Similarly there is a surface $H_b$ isotopic to $H$ so that an innermost curve of intersection between $H_b$ and $D^*$ bounds a c-disk for $H_b$ on the same side as $D_b$. 

Now consider the isotopy $\rho$ between $H_a$ and $H_b$. The surface $H_a=\rho^{-1}(-1)$, intersects $D^*$ in a simple closed curve which gives a $\Gamma$ c-disk for $H$ on one side and the surface $H_b=\rho^{-1}(1)$, intersects $D^*$ in a simple closed curve which gives a $\Gamma$ c-disk for $H$ on the opposite side. As $\rho^{-1}(t)$ always intersects $D^*$ in at least one essential curve there are two cases to consider:

\begin{enumerate}
\item There is some regular value $r$ where $\rho^{-1}(r)$ intersects $D^*$ in two disjoint essential curves $\sigma_a$ and $\sigma_b$ so that the interiors of the disks these curves bound on $D^*$ intersect
  $\rho^{-1}(r)$ only in inessential curves and near their boundaries these disks lie on opposite sides of $H$.
  
\item There is a critical point $c$ so that $\rho^{-1}(c-\epsilon) \cap D^*$ contains an essential curve with the properties of $\sigma_a$ and $\rho^{-1}(c+\epsilon) \cap D^*$ contains an essential curve with the properties of $\sigma_b$. 
\end{enumerate}

 As the curves of intersection in $\rho^{-1}(c-\epsilon) \cap D^*$ are disjoint from the curves $\rho^{-1}(c+\epsilon) \cap D^*$ in either case after an isotopy of $D^*$ to remove any inessential circles of intersection we can obtain c-disks $D_a$ and $D_b$ with disjoint boundaries $\sigma_a$ and $\sigma_b$ that lie on opposite sides of $H$ as desired.
  \end{proof}
 
 \section{Multiple $\Gamma$-bridge splittings of $(M,T, \Gamma)$}  

\subsection{Complexity}
We define a complexity on the thick surfaces $\mc{H}^+$ of a multiple $\Gamma$-Heegaard splitting of $(M,T,\Gamma)$.

\begin{defin}\label{defin:multiset ordering} Let $X$ be a set with an order $\leq$. Let $Y$ and $Z$ be two finite multisets of elements of $X$. Write $Y = (y_1, y_2, \hdots, y_n)$ and $Z = (z_1, z_2, \hdots, z_m)$ so that for all $i$, $y_i \geq y_{i+1}$ and $z_i \geq z_{i+1}$. We say that $Y < Z$ if and only if one of the following occurs:
\begin{itemize}
\item There exists $j \leq \min(n,m)$ so that for all $i < j$, $y_i = z_i$ and $y_j < z_j$.
\item $n < m$ and for all $i \leq n$, $y_i = z_i$.
\end{itemize}
\end{defin}

\begin{defin} Let $S$ be a closed connected surface 
embedded in $M$ transverse to a properly embedded graph $T
\subset M$. The complexity of $S$ is $c(S)=4g(S) + |S \cap T|$. 
\end{defin}

If $\mathcal{H}$ is a $\Gamma$-multiple bridge splitting for $(M,T, \Gamma)$, let the complexity of $\mathcal{H}$, denoted $c(\mathcal{H})$, be the multiset $\{c(S)| S \in \mathcal{H}^+\}$. If $\mathcal{H}$ and $\mathcal{H}'$ are two multiple $\Gamma$-Heegaard splittings for $(M,T, \Gamma)$, their complexities will be compared as in Definition \ref{defin:multiset ordering}.

The next lemma is immediate from the definition of complexity.

\begin{lemma}\label{lem: removing reduces}
Suppose that $\mc{H}$ is a multiple $\Gamma$-Heegaard splitting of $(M,T, \Gamma)$ and suppose that $\mc{J}$ is a multiple $\Gamma$-Heegaard splitting of $(M,T, \Gamma)$ such that $\mc{J}^+$ is a proper subset of $\mc{H}^+$. Then the complexity of $\mc{J}$ is strictly less than the complexity of $\mc{H}$.
\end{lemma}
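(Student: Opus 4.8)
The plan is to unwind the two relevant definitions and observe that passing from $\mc{H}^+$ to the proper subcollection $\mc{J}^+$ only deletes entries from the associated multiset of complexity pairs. Concretely, I would write $c(\mc{H}) = (z_1, \ldots, z_m)$ and $c(\mc{J}) = (y_1, \ldots, y_n)$, each arranged in non-increasing order as required by Definition \ref{defin:multiset ordering}. Since every surface of $\mc{J}^+$ is a surface of $\mc{H}^+$ carrying the same complexity pair, $c(\mc{J})$ is a sub-multiset of $c(\mc{H})$; and because $\mc{J}^+ \subsetneq \mc{H}^+$ at least one surface is discarded, so it is a \emph{proper} sub-multiset and in particular $n < m$.

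First I would dispose of the easy case: if $y_i = z_i$ for all $i \le n$, then since $n < m$ the second clause of Definition \ref{defin:multiset ordering} applies verbatim and gives $c(\mc{J}) < c(\mc{H})$. Otherwise, let $j$ be the least index with $y_j \neq z_j$, so that $y_i = z_i$ for all $i < j$. To invoke the first clause of Definition \ref{defin:multiset ordering} I must show that the disagreement is in the right direction, namely $y_j < z_j$. For this I would count: the $j$ entries $y_1 \ge \cdots \ge y_j$ of $c(\mc{J})$ are, with multiplicity, entries of $c(\mc{H})$, and all of them are $\ge y_j$; hence $c(\mc{H})$ has at least $j$ entries that are $\ge y_j$, which forces its $j$-th largest entry to satisfy $z_j \ge y_j$. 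Combined with $y_j \neq z_j$ this yields $y_j < z_j$, and the first clause then gives $c(\mc{J}) < c(\mc{H})$.

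The only point that needs an actual argument — and hence the sole ``obstacle'' — is pinning down the sign of the inequality at the first index of disagreement, i.e.\ ruling out $y_j > z_j$; the counting argument above settles it using only that the underlying order on complexity pairs is total. Everything else is bookkeeping about sub-multisets of a sorted list, which is exactly why the statement can be asserted as immediate from the definition of complexity.
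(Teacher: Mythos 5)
Your proof is correct, and it coincides with what the paper intends: the paper offers no proof at all, asserting the lemma is ``immediate from the definition of complexity,'' and your argument is precisely the bookkeeping being deemed immediate --- the complexity multiset of $\mc{J}$ is a proper sub-multiset of that of $\mc{H}$, so either the second clause of Definition \ref{defin:multiset ordering} applies (when the first $n$ sorted entries agree) or your counting argument pins down $y_j < z_j$ at the first disagreement. Nothing in your write-up deviates from or adds beyond this intended unwinding, so there is nothing further to compare.
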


Next, we will show that c-compressing a surface always decreases its complexity
\begin{lemma}\label{lem:compreduces}
Suppose $S$ is meridional surface in $(M,T)$ of non-positive euler characteristic. If $S'$ is a component of the surface obtained from $S$ by compressing along 
a c-disk, then $c(S)>c(S')$.
\end{lemma}

\begin{proof}
Suppose $S'$ is obtained from $S$ after a single c-compression along a c-disk $D^*$. If $D^*$ is non-separating then $g(S')=g(S)-1$ and $|S\cap T|\geq|S'\cap T|-2$ so $c(S)\geq c(S')-2$ and thus the complexity has strictly decreased.
If $D^*$ is separating then $S'$ has two components, $S'_1$ and $S'_2$, and $g(S')=g(S'_1)+g(S'_1)$ and $|S\cap T|\geq|S'_1\cap T|+|S'_2\cap T|-2$. As $D^*$ is a c-disk each of $S'_1$ and $S'_2$ must either have positive genus or at least 4 punctures. Therefore each of $S'_1$ and $S'_2$ either has genus strictly less than $g(S)$ or has at least 2 fewer punctures than $S$. In either case we can conclude that $c(S)>c(S'_1)$ and  $c(S)>c(S'_2)$ as desired.

\end{proof}

\begin{rmk}
In place of $c(S)$, we can use any complexity function that strictly decreases when a surface is $c$-compressed or when the number of intersections between the graph and the surface is decreased. For example, Tomova's complexity in \cite{T1} can also be used.
\end{rmk}

\section{Untelescoping}

We are interested in obtaining a multiple $\Gamma$-Heegaard splitting of $(M,T, \Gamma)$ with the property that every thin surface is $\Gamma$-c-incompressible. The following lemma will be useful and follows directly from Theorem \ref{thm:one}.

\begin{lemma}\label{lem: compressible thin surface}
Suppose that $\mc{H}$ is a multiple $\Gamma$-Heegaard splitting of $(M,T, \Gamma)$. If some component of $\mc{H}^-$ is $\Gamma$-c-compressible in $M$ then one of the following occurs:
\begin{itemize}

\item Some component of $\mc{H}^+$ is a $(T,\Gamma)$-c-weakly reducible $\Gamma$-Heegaard splitting of the component of $M - \mc{H}^-$ containing it; or
\item Some component of $M - \mc{H}$ is a product compressionbody with one boundary component in $\mc{H}^-$ and the other in $\mc{H}^+$. The graph $T$ intersects this component in vertical arcs and vertical pods with pod handles in $T - \Gamma$.
\end{itemize}
\end{lemma}

In the first case one can reduce the complexity of the splitting by c-compressing some thick surface along a pair of disjoint c-disks on opposite sides. This operation is called {\em untelescoping} and was first introduced by
 Scharlemann and Thompson in \cite{ST1} in the context of 3-manifolds. The concept was generalized to weakly reducible bridge surfaces for a manifold containing a properly embedded tangle in \cite{HS2} and then to $(\tau,\tau)$-c-weakly reducible $\tau$-bridge surfaces for a manifold containing a properly embedded tangle $\tau$ in \cite{T1}. In the following definition we extend the construction to $(T,\Gamma)$-c-weakly reducible $\Gamma$-bridge surfaces for $(M,T, \Gamma)$ where $T$ is a properly embedded graph. 

Let $H$ be a $(T,\Gamma)$-c-weakly reducible $\Gamma$-bridge splitting of $(M, T, \Gamma)$  and let $\mathcal{D}_1$ and $\mathcal{D}_2$ be collections of pairwise disjoint $\Gamma$-c-disks above and below $H$ such that $\mathcal{D}_1 \cap \mathcal{D}_2=\emptyset$. Then we can obtain a multiple $\Gamma$-bridge splitting for $(M, T)$ with one thin surface obtained from $H$ by $(T,\Gamma)$-c-compressing it along $\mathcal{D}_1\cup\mathcal{D}_2$ and two thick surfaces, obtained by $(T,\Gamma)$-c-compressing $H$ along $\mathcal{D}_1$ and $\mathcal{D}_2$ respectively. Although this operation is similar to the one described in \cite{T1} we include the details here with the modifications required.

Let $(A, A\cap T)$ and $(B, B\cap T)$ be the two $\Gamma$-compressionbodies 
into which $H$ decomposes $(M, T,\Gamma)$ and let $\mathcal{D}_A \subset A$ and $\mathcal{D}_B\subset B$ be collections of pairwise disjoint $\Gamma$-c-disks such that $\mathcal{D}_A \cap \mathcal{D}_B=\emptyset$. Let $A' = A-\inter{\eta}(\mathcal{D}_A)$ and $B'
=B-\inter{\eta}(\mathcal{D}_B)$. Then by Lemma \ref{lem:compressingcompressionbodies} $A'$ and $B'$ are each the disjoint union of $\Gamma$-compressionbodies containing trivial graphs $A' \cap T$ and $B' \cap T$ respectively.  

Take small collars $\eta(\bdd_+A')$ of
$\bdd_+A'$ and $\eta(\bdd_+B')$ of $\bdd_+B'$.  Let
$C^1=cl(A'-\eta(\bdd_+A'))$, $C^2=\eta(\bdd_+A')\cup \eta(\mathcal{D}_B)$,
$C^3=\eta(\bdd_+B')\cup \eta(\mathcal{D}_A)$ and
$C^4=cl(B'-\eta(\bdd_+B'))$.  Note that $C_1$ and $C_4$ are $\Gamma$-compressionbodies containing trivial graphs because they are homeomorphic to $A'$ and $B'$ respectively. $C_2$ and $C_3$ are obtained by taking $\text{surface} \times I$ containing vertical arcs and attaching 2-handles, some of which may contain segments of $\Gamma$ as their cores. Therefore $C_2$ and $C_3$ are also $\Gamma$-compressionbodies containing trivial graphs. We conclude that we have obtained a multiple $\Gamma$-bridge
splitting $\mathcal{H}$ of $(M,T, \Gamma)$ with thick surfaces $\bdd_+ C_1$ and $\bdd_+ C_2$ that can be obtained from $H$ by $\Gamma$-c-compressing along $\mathcal{D}_A$ and $\mathcal{D}_B$ respectively and a thin surface $\bdd_- C_2= \bdd_-C_3$ obtained from $H$ by $\Gamma$-c-compressing along both sets of c-disks. We say that $\mc{H}$ is obtained by untelescoping $H$ using $\Gamma$-c-disks. The next remark follows directly from Lemma \ref{lem:compreduces}.

\begin{rmk}
Suppose $\mathcal{H'}$ is a multiple $\Gamma$-bridge
splitting of $(M,T, \Gamma)$ obtained from another multiple $\Gamma$-bridge
splitting $\mathcal{H}$ of $(M,T, \Gamma)$ via untelescoping. Then $c(\mathcal{H'}) < c(\mathcal{H})$.
\end{rmk}

Suppose that $H$ is a multiple $\Gamma$-Heegaard splitting for $(M,T, \Gamma)$ with a thick surface that can be untelescoped to become a multiple $\Gamma$-Heegaard splitting $\mc{H}$ for $(M,T, \Gamma)$. It is natural to ask: If $\mc{H}$ contains a generalized stabilization must $H$ contain a generalized stabilization? If $\mc{H}$ contains a perturbed thick surface must $H$ also be perturbed? If $T$ has a path which is removable with respect to $\mc{H}$, is that path removable with respect to $H$? The answer is positive in all cases.
  
\begin{lemma}\label{lem:stabilization}
 Let $H$ be a (multiple) $\Gamma$-Heegaard splitting for the triple $(M,T, \Gamma)$ obtained by amalgamating the multiple $\Gamma$-Heegaard splitting $\mc{H}$ for $(M,T, \Gamma)$. (We assume that the amalgamation actually produces a (multiple) $\Gamma$-Heegaard splitting which is not necessarily the case). Then the following hold
 \begin{enumerate}
 \item Suppose the $\Gamma$-Heegaard splitting induced by $\mathcal{H}^+$ on some component of $(M,T, \Gamma)-\mathcal{H}^-$ 
contains a generalized stabilization. Furthermore suppose that if the generalized stabilization is a (meridional) boundary stabilization, then it is along a component of $\bdd M$. Then $H$ contains a generalized stabilization of the same type and if the generalized stabilization is a (meridional) boundary stabilization then it must also be along a component of $\bdd M$.
\item If the $\Gamma$-Heegaard splitting induced by $\mathcal{H}^+$ on some component of $(M,T,\Gamma)-\mathcal{H}^-$ 
is perturbed, then so is $H$.
\item If the $\Gamma$-Heegaard splitting induced by $\mathcal{H}^+$ on some component of $(M,T,\Gamma)-\mathcal{H}^-$ 
contains a removable path so that the path is either a cycle or both of its endpoints are contained in $\bdd M$, then so does $H$.
\item If $T$ contains a circle or edge component $\gamma$ that is isotopic into the spine of one of the components of $M - \mc{H}$, then $\gamma$ is isotopic into the spine of one of the compressionbodies of $M - H$.
\end{enumerate}
\end{lemma}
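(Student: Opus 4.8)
The plan is to reverse the untelescoping: since $\mc{H}$ is obtained from $H$ by $(T,\Gamma)$-c-compressing along the disk collections $\mathcal{D}_A$ and $\mathcal{D}_B$, the surface $H$ is recovered from $\mc{H}$ by amalgamating across the thin surface $F$ that the untelescoping creates, and I would show that the disks (or product regions) witnessing a feature survive this amalgamation. First I would note the reduction to a single untelescoping step: since $\mc{H}^- = H^- \cup F$, any component of $(M,T,\Gamma)-\mc{H}^-$ disjoint from $F$ is a component of $(M,T,\Gamma)-H^-$ carrying the same induced splitting, so the statement is trivial there; hence I assume the feature occurs on a component meeting $F$. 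Write $H_A = H/\mathcal{D}_A$ and $H_B = H/\mathcal{D}_B$ for the two thick surfaces produced, and $F = H/(\mathcal{D}_A \cup \mathcal{D}_B)$; then $F$ separates the affected region into a component $W_A$ containing $H_A$ and a component $W_B$ containing $H_B$, and without loss of generality the feature occurs on $W_A$. The key point is that $H$ is obtained from $H_A$ by attaching tubes dual to $\mathcal{D}_A$ alone; these tubes are localised near $\bdd \mathcal{D}_A$ and lie in a single one of the two compression bodies into which $H_A$ splits $W_A$. Hence a witnessing disk lying in the other compression body is automatically disjoint from the tubes, while a disk in the same compression body can be isotoped off them after being put in minimal position with $\mathcal{D}_A$; once disjoint from the tubes, a disk's boundary lies on the part of $H_A$ it shares with $H$, and it becomes a disk of the same type for $H$.

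I would then run this transfer for each feature. For (1), if the induced splitting of $W_A$ is stabilized or meridionally stabilized, its witnessing pair is a $T$-compressing disk together with a $T$-compressing or $(T,\Gamma)$-cut disk on opposite sides of $H_A$ meeting in one point; choosing the pair to minimise intersections with $\mathcal{D}_A$ and transferring both disks yields a pair for $H$ still meeting in a single point, so $H$ carries a generalized stabilization of the same type. If the induced splitting is (meridionally) boundary stabilized then, by hypothesis, the stabilization is along a component $G \subset \bdd M$; the associated product region is a collar of $G$ meeting $T$ in vertical arcs, hence disjoint from a neighborhood of $F$ and from the amalgamation tubes, so it persists as a boundary stabilization of $H$ along the same $G$. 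For (2), the cancelling pair of bridge disks $\{D_1,D_2\}$ with $|\bdd D_1 \cap \bdd D_2| = 1$ transfers by the same argument to a cancelling pair for $H$ with a single point of intersection, so $H$ is perturbed.

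For (3), I would transfer simultaneously all the data of the removable path: the cancelling pair $\{D_1,D_2\}$ (if $\zeta$ is a cycle) or the bridge disk $D$ (if the endpoints of $\zeta$ lie in $\bdd M$), together with the auxiliary compressing disk $E$ and a complete collection of bridge disks $\Delta$ containing the relevant disks. After putting all of these in minimal position with $\mathcal{D}_A$ and applying the disjointness principle above, I would verify that the defining conditions survive: that $\zeta$ still meets $H$ exactly twice, that $|E \cap T| = 1$ and $|\bdd E \cap \bdd D_{j+1}| = 1$ in the cycle case or $|E \cap D| = 1$ in the boundary case, and that $E$ remains disjoint from $\Delta$. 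The hypothesis that $\zeta$ is a cycle or has both endpoints in $\bdd M$ is precisely what guarantees that $\zeta$ survives amalgamation across the interior surface $F$, since a path with an endpoint on $F$ would have nowhere to terminate in $H$.

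The hard part will be the disjointness step of the first paragraph, carried out so as to preserve the exact intersection patterns and not merely the topological type. For (1) and (2) the single point of $\bdd D_1 \cap \bdd D_2$ must neither be created nor destroyed by the isotopies that remove intersections with the tubes, and for (3) the several interlocking numerical conditions must all survive at once. The delicate situation is when a witnessing disk shares a compression body with an amalgamation tube; there I would argue exactly as in Cases 1--4 of Lemma \ref{lem: creating removable edges}, using innermost-disk and outermost-arc moves on the intersection of the disk with $\mathcal{D}_A$ to push the disk off the dual tubes, and I would check separately that none of these moves alters the prescribed single intersection points.
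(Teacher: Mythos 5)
Your overall plan---reduce to a single untelescoping and transfer the witnessing disks back to $H$---matches the paper's, but the structural claim on which your transfer mechanism rests is false. After untelescoping, $H$ is recovered from the thick surface $H_1$ by \emph{amalgamation}: taking a collar $\mc{H}^- \times [-1,1]$ of the thin surface $F$, the tubes dual to the c-disks attach to $H_1$ on its $F$-side, run through the collar, cross $F$, and attach near the opposite thick surface $H_2$ (in the paper's construction the solid tubes $\eta(\mathcal{D}_A)$ are explicitly part of the compression body $C^3$ on the \emph{far} side of the thin surface). So the tubes do not ``lie in a single one of the two compression bodies into which $H_A$ splits $W_A$''; they are not contained in $W_A$ at all, and indeed $H$ itself generally cannot be isotoped into $W_A$, since $F$ may be essential while $H$ splits the whole manifold. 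Because of this, a witnessing disk in the compression body of $W_A$ adjacent to $F$ cannot in general be isotoped off the tubes by putting it in minimal position with $\mathcal{D}_A$: the extreme case, which actually occurs, is a stabilizing disk parallel to a \emph{core} of one of the handles, and no innermost-disk/outermost-arc move removes the intersection of a disk with a tube whose core it is. The paper's Case 1A does the opposite of what you propose: it re-chooses the handle structure of the compression body between $F$ and $H_1$ so that $\bdd D$ \emph{is} a core of an attached handle (and likewise arranges $\bdd E$ as a handle core in the removable-cycle case), so the disk survives into $H$ by construction rather than by an isotopy off the tubes.

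The ``automatic'' half of your mechanism is also not automatic: the opposite-side disk $E$ has boundary on $H_1$, which is not a subsurface of $H$, and the paper must extend it to $E' = E \cup (\bdd E \times [-1,1])$ through the collar, using that $T$ is vertical there so no new punctures are created, before $(D,E')$ stabilizes $H$. Similarly, your treatment of the (meridional) boundary-stabilized case---``the product region is disjoint from the tubes, so it persists''---is unjustified, since the witnessing datum is a c-disk $D'$ splitting off the product region, and that disk can lie on the $F$-side; the paper instead amalgamates the splitting $\tilde H_1$ with $H_1$ and identifies the result with $H$ c-compressed along the disk. Your appeal to Cases 1--4 of Lemma \ref{lem: creating removable edges} does not fill the gap either: there one makes disks disjoint from a fixed cut disk and bridge disk for a \emph{fixed} bridge surface, whereas here the surface itself changes by amalgamation. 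For your cases (2) and (3) the correct (and much simpler) observation, which the paper uses, is that bridge disks and cancelling pairs can be taken entirely disjoint from the collar $\mc{H}^- \times [-1,1]$, so extending the handles through the collar does not affect them at all; only the auxiliary compressing disk $E$ of a removable cycle or path needs the handle-core trick above.
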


\begin{proof}
Without the loss of much generality, we may assume that $H$ is obtained from $\mc{H}$ by a single amalgamation (i.e., we can obtain $\mc{H}$ from $H$ by untelescopying in one step). The last observation is the easiest to deal with: as $H$ is obtained from $\mc{H}$ by an amalgamation, the spine of a compressionbody after amalgamation can be obtained by gluing together the 1--complexes forming the spines of two compressionbodies before amalgamation. Thus, a loop in one of those spines remains in a spine after amalgamation. 

We now proceed to prove the other observations. Suppose that $H$ decomposes $(M,T)$ into compressionbodies $A$ and $B$. Let $N_i$ for $i=1,2$ be the closure of the components of $M-\mathcal{H}^-$ on either side of $\mc{H}^-$. Let $H_i$ be the $\Gamma$-Heegaard surface of $N_i$ induced by $\mc{H}^+$. Assume that $H_1$ contains a generalized stabilization, or is perturbed, or that $T \cap N_1$ contains a removable path satisfying the hypotheses of (3).

Consider a collar $\mathcal{H}^- \times [-1,1]$ of $\mathcal{H}^-$ where $\mathcal{H}^-= \mathcal{H}^- \times \{0\}$. Recall that $H_1$ can be obtained 
from $ \mathcal{H}^- \times [0, 1]$ by attaching handles (possibly with cores running along the knot) to $ \mathcal{H}^- \times \{1\}$. Similarly, $H_2$ can
be obtained from $ \mathcal{H}^-\times [-1, 0]$ by attaching handles to $ \mathcal{H}^- \times \{-1\}$. The Heegaard surface $H$ can be obtained by extending 
the handles of $H_1$ through $ \mathcal{H}^- \times [-1, 1]$ and attaching them to $H_2$. 

{\bf Case 1a:} $H_1$ is stabilized or meridionally stabilized. Let $D$ and $E$ be the $(T, \Gamma)$-c-disks with boundary on $H_1$ defining the (meridional) stabilization. We may assume that the handles attached to $\mathcal{H}^-\times \{1\}$ include one that has $\bdd D$ as a core. The intersection of $\bdd E$ with $\mathcal{H}^- \times \{1\}$ then consists of a single arc. The possibly once punctured disk $E' = E \cup (\bdd E \times [-1, 1])$ is then a c-disk with boundary on $H$. The c-disks $E'$ and $D$ intersect exactly once and define a (meridional) stabilization of $H$. See Figure \ref{fig:stabilization}.

\begin{figure}[tbh]
\centering
\includegraphics[scale=0.4]{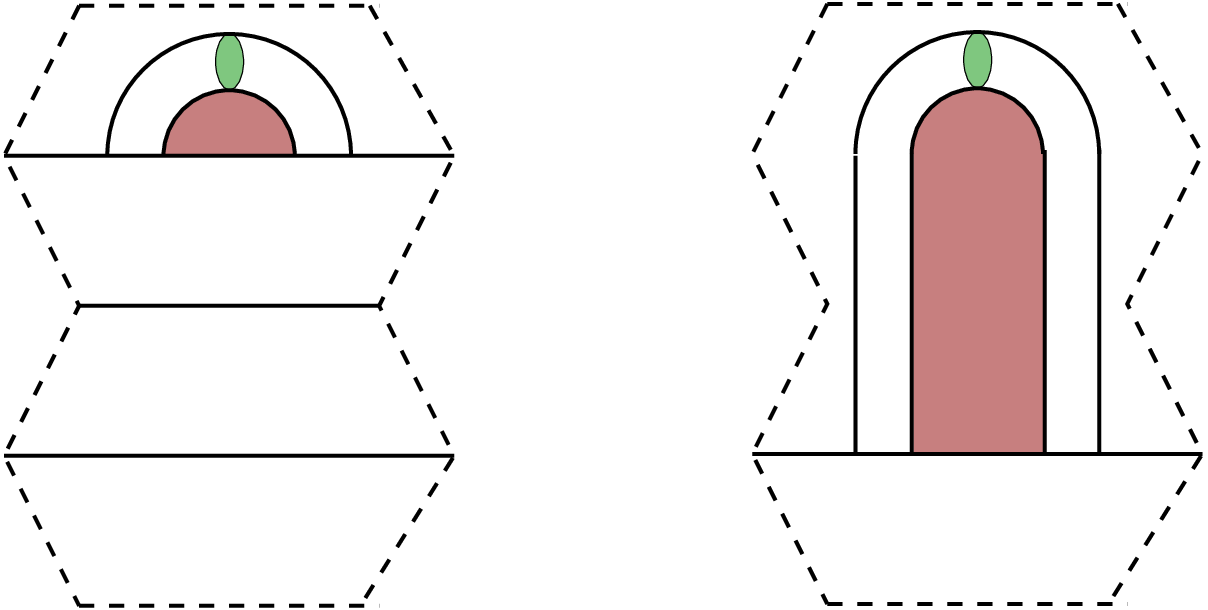}
\caption{If $H_1$ is stabilized, so is $H$.} \label{fig:stabilization}
\end{figure}

{\bf Case 1b:} $H_1$ is boundary stabilized or meridionally boundary stabilized. Let $C^+_1$ and $C^+_2$ be the two $\Gamma$-compressionbodies into which $H_1$ divides $N_1$. Without loss of generality in this case $C^+_1$ contains some component $G \subset \bdd N$ and a c-disk $D$ so that removing $\eta(D)$ from $C^+_1$ decomposes it into a compressionbody $C'^+_1$ and a component $R$ homeomorphic to $G \times I$ and adding $R$ to $C^+_2$ results in a compressionbody $C'^+_2$. Let $\tilde H_1=\bdd_+ C'^+_1=\bdd_+ C'^+_2$. Amalgamating the multiple bridge splitting with thick surfaces $\tilde H_1$ and $H_1$ gives a bridge splitting for $N$ with bridge surface $\tilde H$ which can be obtained from $H$ by c-compressing along the disk $D$. Therefore $H$ is boundary stabilized or meridionally boundary stabilized. 

{\bf Case 2:} $H_1$ is perturbed. Let $D$ and $E$ be the two bridge disks for $H_1$ that intersect in one or two points and these points lie in $T$. Both disks are completely disjoint from $\mathcal{H}^- \times \{1\}$ so in particular extending the 1-handles from $H_1$ across $\mathcal{H}^- \times [-1,1]$ has no effect on these disks. Therefore $H$ is also perturbed.

{\bf Case 3:} Suppose $\zeta$ is a removable path in $N_1$ and suppose first that $\zeta$ is a cycle. Let $C_1$ and $C_2$ be the two $\Gamma$-compressionbodies $\cls(N_1 - H_1)$. As $\zeta$ is a removable cycle there exists a cancelling pair of disks $\{D_1,D_2\}$ for $\zeta$ with $D_j \subset C_j$. These disks are both completely disjoint from $\mathcal{H}^- \times \{1\}$ so in particular extending the 1-handles from $H_1$ across $\mathcal{H}^- \times [-1,1]$ has no effect on these disks.  As $\zeta$ is a removable cycle there exists a compressing disk $E$ in $C_1$, say, so that $|E \cap T| = 1$ and $|\bdd E \cap \bdd D_2| = 1$ and $E$ is otherwise disjoint from a complete collection of bridge disks for $(T\cap N_1) - H_1$ containing $D_1 \cup D_2$. We may assume that the handles attached to $\mathcal{H}^-\times \{1\}$ include one that has $\bdd E$ as a core. Therefore $E$ satisfies all the desired properties as a compressing disk for $H$.

Suppose then that $\zeta$ is a path. Again we may assume that the handles attached to $\mathcal{H}^-\times \{1\}$ include one that has $\bdd E$ as a core. The bridge disk for the component of $\zeta-H_1$ is also a bridge disk for $\zeta - H$ satisfying all of the required properties.
\end{proof}

\section{Consolidation}
We now return to the second conclusion of Lemma \ref{lem: compressible thin surface}. Scharlemann and Thompson in \cite{ST1} encounter a similar situation and are able to simply remove the two parallel surfaces from the collection of surfaces comprising the multiple Heegaard splitting. In the context of a 3-manifold without an embedded graph this results in a new multiple Heegaard splitting. When there is a graph present though, it is possible that after the removal of the two surfaces the collection of the remaining surfaces is no longer a multiple $\Gamma$-Heegaard surface, see Figure \ref{Fig: nonbridge} for an example. Thus we will need to analyze this situation very carefully. We begin by introducing some additional definitions.

\begin{figure}
\labellist \small\hair 2pt 
\pinlabel {$V$} [bl] at 26 185
\endlabellist 
\centering 
\includegraphics[scale=.4]{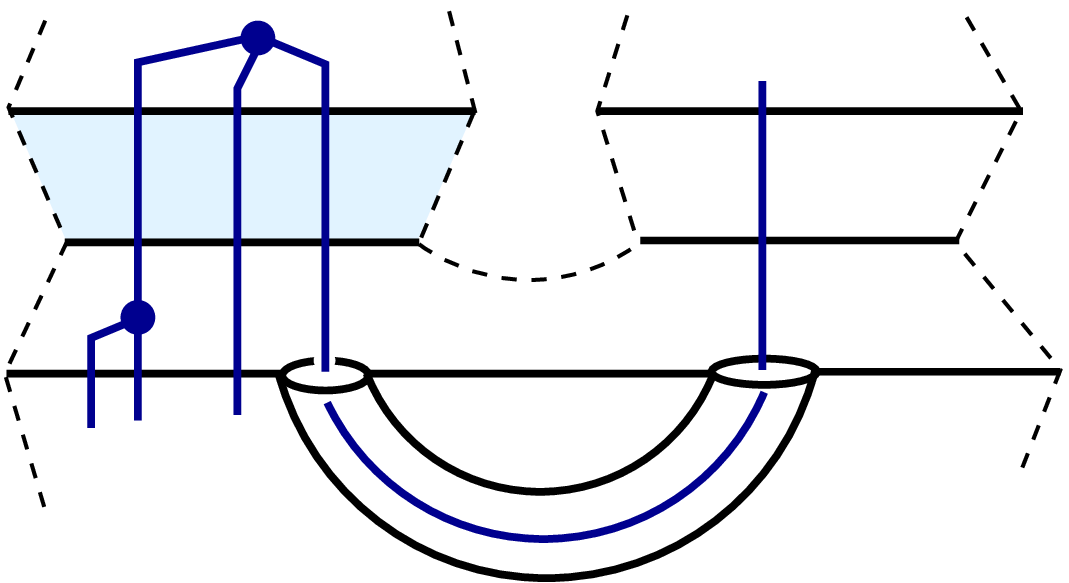}
\caption{Removing the parallel thick and thin surfaces that form the boundary of $V$ destroys bridge position. Note the presence of a pod handle and a ghost arc.}
\label{Fig: nonbridge}
\end{figure}

\subsection{Parallelism and Consolidation} Suppose that $\mc{H}$ is a multiple $\Gamma$-Heegaard splitting and that $V$ is the closure of a component of $M - \mc{H}$ such that $(V,T \cap V)$ is homeomorphic to $(F \times I, \text{points} \times I)$ for a component $F \subset \boundary V$. We allow the possibility that $T \cap V = \nil$. We say that $V$ is a {\em product region} of $\mc{H}$. A product region is a {\em parallelism} of $\mc{H}$ if it is disjoint from $\boundary M$.  If $\mc{H}$ is a multiple $\Gamma$-Heegaard splitting with a parallelism $V$, we say that $\mc{H}' = \mc{H} - (\mc{H} \cap V)$ is obtained from $\mc{H}$ by {\em consolidating} $V$. We also say that $\mc{H}'$ is obtained from $\mc{H}$ by {\em consolidation}. In what follows we develop criteria to guarantee that $\mc{H}'$ is also a multiple $\Gamma$-Heegaard splitting. 

If $V$ is a parallelism, let $V_\pm$ be the closure of the components of $M - \mc{H}$ not in $V$ that are adjacent to $\mc{H}^\pm \cap V$ respectively. Notice that each of $V_\pm$ has exactly one component. Let $T_\pm = T \cap V_\pm$.

We say that $V$ satisfies the {\em consolidation criteria} if all of the following hold:
\begin{enumerate}
\item[(C1)] No circle component of $T$ is contained entirely in $V_- \cup V \cup V_+$.
\item[(C2)] No edge of $T$ with endpoints at vertices of $T$ is contained entirely in $V_- \cup V \cup V_+$.
\item[(C3)] If $v \in T_+$ is a vertex adjacent to a handle, then there does not exist a ghost arc $\tau$ in $T_-$ connected in $V_- \cup V \cup V_+$ to $v$.
\item[(C4)] If $v$ is a vertex of $T_+$ then there is at most one edge of $T \cap (V_- \cup V \cup V_+)$ with one endpoint at $v$ and one at $\boundary_- V_+$.
\end{enumerate}

\begin{lemma}
Suppose that $\mc{H}$ is a multiple $\Gamma$-Heegaard splitting of $(M,T,\Gamma)$ and that $V$ is a parallelism of $\mc{H}$ satisfying the consolidation criteria. Let $\mc{H}'$ be obtained by consolidating $V$. Then $\mc{H}'$ is a multiple $\Gamma$-Heegaard splitting of $(M,T)$.
\end{lemma}
\begin{proof}
The multiple $\Gamma$-Heegaard splitting $\mc{H}$ defines a cobordism from $\boundary_+ V_-$ to $\boundary_- V_+$, with only 2-handles and 3-handles. We can attach all the 2-handles before all the 3-handles and so $\mc{H}'$ is a multiple Heegaard splitting of $M$. 

Let $W = V_- \cup V \cup V_+$. Suppose that $D$ is a disk in $V_+$ with $\boundary D$ the union of two arcs, one running along $T$ and the other running along $\boundary_+ V_+$. Using the product structure of $V$ and the fact that $T_-$ is trivially embedded in $V_-$, $\boundary D \cap \boundary_+ V_+$ can be extended through $V$ and $V_-$ to be a disk whose boundary is the union of an arc on $T$ and an arc on $\boundary_+ V_-$. Thus, by (C1) and (C2), $T \cap W$ is trivially embedded if and only if each vertex of $T \cap W$ is joined by at most a single edge to $\boundary_- V_+$ and if any such edge is disjoint from a complete collection of $\Gamma$-c-disks in $W$ for $\boundary_+ V_-$. Conditions (C3) and (C4) ensure that this occurs. 
\end{proof}

\begin{lemma}\label{Lem: Product Regions Post-Consol}
Let $V$ be a parallelism of $\mc{H}$ satisfying the consolidation criteria. Let $\mc{H}'$ be the multiple $\Gamma$-Heegaard splitting obtained by consolidating $V$. Then $W = V_- \cup V \cup V_+$ is a product region of $\mc{H}'$ if and only if $V_-$ and $V_+$ are product regions of $\mc{H}$.
\end{lemma}

\begin{proof}
The ``if'' direction is obvious, so we prove only the ``only if'' direction. Suppose that $W$ is a product region of $\mc{H}'$ and let $W_\pm$ be the closure of the component of $M - \mc{H}'$ adjacent to $\mc{H}'^\pm \cap V$. Notice that $W_-$ and $W_+$ are also closures of components of $M - \mc{H}$. 

Let $e$ be a component of $T \cap W$. Since $W$ is a product region, $e$ contains no vertices and joins $\boundary_- V_+$ to $\boundary_+ V_-$. Since those two surfaces are the boundary of $W$, they are parallel. Hence, there are no ghost arcs in $V_-$ or $V_+$. Thus, $e$ intersects each component of $\mc{H}^- \cap W$ exactly once. Since $V$ is a product region, $e$ intersects each component of $\mc{H} \cap W$ exactly once. Hence, $e$ is the union of vertical arcs in $V_-$, $V_+$, and $V$, and so $V_-$ and $V_+$ are product regions. 
\end{proof}

Finally we note the following:
\begin{lemma}\label{Lem: vert and ghosts}
Suppose that $\mc{H}$ is a multiple $\Gamma$-c-Heegaard surface for $(M,T,\Gamma)$. Suppose that no pod handle of $T$ is adjacent to $\mc{H}^-$ and that $\mc{K}$ is obtained by consolidating a parallelism $V$ of $\mc{H}$. If $T - \mc{K}$ has an edge joining a vertex of $T$ to $\mc{K}^-$ or an edge joining two vertices of $T$, then $T \cap V_+$ has a vertex joined in $T$ to a ghost arc of $T \cap V_-$.
\end{lemma}
\begin{proof}
Suppose that $e$ is an edge of $T - \mc{K}$ that either joins two vertices of $T$ or that joins a vertex to $\mc{K}^-$. Since no pod handle of $T$ is adjacent to $\mc{H}^-$, $e$ must intersect $V$. It does so in vertical arcs since $V$ is a parallelism. Each component of $T \cap V_-$ adjacent to $\boundary_- V$ is either a vertical arc, a ghost arc, or a vertical pod. In fact, it must be a ghost arc or vertical arc since no pod handle of $T$ is adjacent to a thin surface of $\mc{H}$. Each vertical arc in $V_-$ has an endpoint on the thick surface $\mc{K}^+ \cap V_-$, so $e \cap V_-$ must contain a ghost arc and does not contain a vertex of $T$. Hence $T \cap V_+$ has a vertex joined in $T$ to a ghost arc of $T \cap V_-$.
\end{proof}
\begin{cor}\label{Cor: vert and ghosts}
Suppose that $\mc{H}$ is a multiple $\Gamma$-c-Heegaard splitting of $(M,T,\Gamma)$ and that $V$ is a parallelism of $\mc{H}$ satisfying the following:
\begin{itemize}
\item No pod handle of $T$ is adjacent to $\mc{H}^-$
\item No vertex of $T \cap V_+$ is adjacent in $T$ to a ghost arc of $T \cap V_-$.
\end{itemize}
Then $V$ satisfies consolidation criteria (C2), (C3), and (C4).
\end{cor}
\begin{proof}
(C3) is satisfied by hypothesis. Let $\mc{K}$ be the result of consolidating $V$. If (C2) were not satisfied, then we would have an edge of $T - \mc{K}$ joining two vertices of $T$, contradicting Lemma \ref{Lem: vert and ghosts}. If (C4) were not satisfied, then we would have an edge of $T - \mc{K}$ joining a vertex to a component of $\mc{K}^-$, also contradicting Lemma \ref{Lem: vert and ghosts}. 
\end{proof}

\section{Combining untelescoping and consolidation.} We will usually use consolidation in conjunction with untelescoping. When the untelescoping operation is performed along a single pair of $\Gamma$-c-disks both thin and thick surfaces are produced. There are two possibilities for a parallelism to arise: either one of the new thick surfaces is parallel to one of the new thin surfaces or a new thick surface is parallel to a thin surfaces that existed before the untelescoping. In order to consider these possibilities in more detail we introduce the following terminology:

\begin{defin}
Suppose that $H$ is a  $\Gamma$-c-weakly reducible $\Gamma$-c-Heegaard splitting of $(M,T, \Gamma)$. A pair of disjoint $\Gamma$-c-disks $D_1$, $D_2$  on opposite sides of $H$ are called {\em maximally separated} if whenever $\{i,j\} = \{1,2\}$ the following is true:

If $E$ is a $\Gamma$-c-disk for $H$ on the same side of $H$ as $D_i$ such that $\boundary D_i$ separates $\boundary E$ from $\boundary D_j$, then $\boundary E$ is parallel in $H - T$ to $\boundary D_i$.  
\end{defin}
Notice that if $D_1$ and $D_2$ are both non-separating then they are automatically maximally separated, because the condition is vacuously true. It is clear that if $H$ is $\Gamma$-c-weakly reducible then there exist a maximally separated pair of $\Gamma$-c-disks on opposite sides of $H$.

\begin{lemma}\label{Lem: Product Details}
Let $M$ be connected. Suppose that $D_1$ and $D_2$ are a pair of maximally separated $\Gamma$-c-disks for a $\Gamma$-c-Heegaard surface $H$ for $(M,T,\Gamma)$. Assume that $\Gamma$ is disjoint from the vertices of $T-\boundary T$. Let $\mc{H}$ be the result of untelescoping $H$ using $D_1 \cup D_2$. Then the following hold:
\begin{enumerate}
\item $\mc{H}$ has at most two parallelisms and each satisfies the consolidation criteria.
\item After consolidating those parallelisms, the resulting multiple $\Gamma$-Heegaard surface has no new pod handles.
\item Suppose that $V_+$ is a component of $M - \mc{H}$ adjacent to a parallelism $V$. Then $V_+$ is one of the following:
\begin{itemize}
\item A 3--ball containing a single pod and the adjacent disk of $D_1 \cup D_2$ is not a cut disk.
\item A 3--ball containing a single arc and the adjacent disk of $D_1 \cup D_2$ is not a cut disk.
\item A product compressionbody that is either disjoint from $T$ or which intersects $T$ in vertical arcs and vertical pods.
\end{itemize}
\end{enumerate}
\end{lemma}
\begin{proof}
Untelescoping does not produce pod handles, so $\mc{H}$ has no new pod handles. Since $\Gamma$ is disjoint from the vertices of $T-\boundary T$, if either $D_1$ or $D_2$ is a cut disk, it does not intersect a pod or vertical pod of $T$. If $D_1$ and $D_2$ are non-separating then after the untelescoping there cannot be any regions of parallelism as the resulting thin surface has complexity strictly less than the complexity of either of the thick surfaces. Therefore we only need to consider the case where at least one of the disks is separating. 

Let $H_i$ be the union of the components of $\mc{H}^+$ obtained by compressing $H$ along $D_i$. Since $D_i$ is a single c-disk, each $H_i$ contains at most two components. The thin surface $\mc{H}^-$ is obtained by compressing $H$ along $D_1 \cup D_2$ and so contains at most three components. Let $C_i^\pm$ be the closures of the components of $M - \mc{H}$ adjacent to $H_i$ with $C_i^-$ adjacent to $\mc{H}^-$. Notice that $T \cap C_i^-$ consists of vertical arcs and at most one ghost arc. Since $\Gamma$ is disjoint from the vertices of $T - \boundary T$, the ghost arc does not join up to a vertex of $T - \boundary T$. See Figure \ref{Fig: Untelescoping1}. 

\begin{figure}
\labellist \small\hair 2pt 
\pinlabel {$D_1$} [b] at 75 165
\pinlabel {$D_2$} [t] at 192 20
\pinlabel {$H_1$} [r] at 309 110
\pinlabel {$H_2$} [r] at 307 58
\pinlabel {$C_1^-$} [b] at 560 87
\pinlabel {$C_2^-$} [t] at 560 83
\pinlabel {$C_1^+$} [b] at 560 129
\pinlabel {$C_2^+$} [b] at 560 17
\endlabellist 
\centering 
\includegraphics[scale=.7]{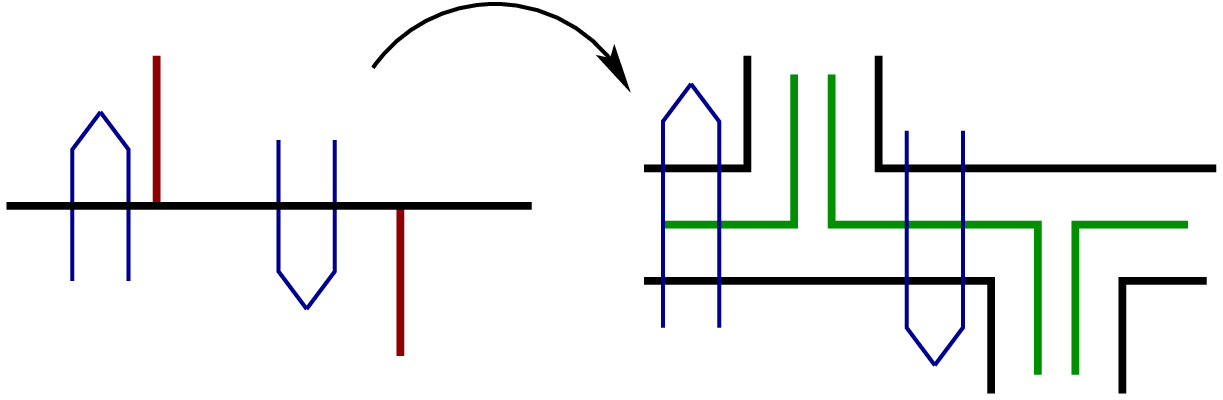}
\caption{Untelescoping a Heegaard splitting using two disks.}
\label{Fig: Untelescoping1}
\end{figure}

One component of $H_i$ is compressible in $C_i^-$ and so $C_i^-$ contains at most one product region and if it contains such a product region then it is not connected, i.e. the c-disk $D_i$ is separating. Thus $\mc{H}^-$ contains at most two parallelisms and such a parallelism is adjacent to exactly one of $D_1$ or $D_2$. The two parallelisms are shaded in Figure \ref{Fig: Parallelisms1}.

\begin{figure}
\labellist \small\hair 2pt 
\pinlabel {$V_1$} [t] at 55 91
\pinlabel {$V_2$} [b] at 214 58
\endlabellist 
\centering 
\includegraphics[scale=.8]{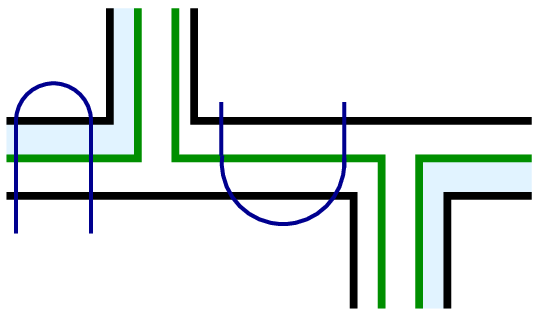}
\caption{$V_1$ and $V_2$ are the two possible parallelisms.}
\label{Fig: Parallelisms1}
\end{figure}

Since $T \cap C_i^-$ consists of vertical arcs and at most one ghost arc and since that ghost arc, if it exists, joins distinct components of $\mc{H}^-$, it is straightforward to verify that each parallelism of $\mc{H}$ satisfies the consolidation criteria. Furthermore, consolidating one of the parallelisms does not alter the other region or the components of $M - \mc{H}$ on either side of it and so it will continue to satisfy the consolidation criteria and therefore can also be consolidated. 

Let $V$ be a parallelism of $\mc{H}$. Without loss of generality, assume that $V \subset C_1^-$. Note that this implies that $D_1$ is separating and that $\boundary D_2$ is on the side of $\boundary D_1$ not adjacent to $V$. Let $H'_1 = \mc{H}^+ \cap V_+$.  If $H'_1$ had a $\Gamma$-c-disk then its boundary could be extended through $V$ to lie on $H$. The boundary of this extended disk is on the inside of $D_1$ with respect to $D_2$ and does not have boundary parallel to $\boundary D_1$. This contradicts the choice of $D_1$.

Notice that $V_+$ cannot be a 3--ball disjoint from $T$, since $D_1$ and $D_2$ were c-disks. Since $H'_1$ does not have any $\Gamma$-c-disk in $V_+$, if there exists a bridge arc of $T \cap V_+$, then $V_+$ is a 3--ball and $\tau = T \cap V_+$ is connected. Furthermore, if $D_1$ intersected $T$ (i.e. was a cut disk), then $\boundary D_1$ would bound a once-punctured disk in $H$, since $\boundary V_+$ is a 2--sphere. This contradicts the definition of cut disk, so $D_1$ is not a cut disk.

Similarly, if $T \cap V_+$ contains a bridge pod, $V_+$ must be a 3--ball and $T \cap V_+$ is connected. $D_1$ is not a cut disk since $\Gamma$ is disjoint from the vertices of $T - \boundary T$. Thus, we may assume that $T \cap V_+$ consists of vertical arcs and vertical pods. Since $H'_1$ is incompressible in $V_+$, $V_+$ must be a product compressionbody.
\end{proof}

\begin{lemma}\label{lem:removing parallelisms}
Suppose that $(M,T)$ is a (3-manifold, graph) pair. Let $\mc{K}$ be a multiple $\Gamma$-Heegaard splitting for $(M,T,\Gamma)$ such that the following hold:
\begin{enumerate}
\item $\Gamma$ is disjoint from the vertices of $T - \boundary T$ and no pod handle of $T - \mc{K}$ is adjacent to a component of $\mc{K}^-$.
\item $\mc{K}$ does not have any parallelisms.
\item No circle component of $\Gamma$ is isotopically core for $\mc{K}$.
\end{enumerate}
If some component $H$ of $\mc{K}^+$ is $\Gamma$-c-weakly reducible in $M - \mc{K}^-$ then there is a non-empty sequence of untelescopings and consolidations of $\mc{K}$ converting it into a multiple $\Gamma$-Heegaard splitting $\mc{J}$ for $(M,T)$ satisfying (1), (2), and (3).
\end{lemma}
\begin{proof}
Let $\mc{H}$ be obtained by untelescoping $H \subset \mc{K}$ using a pair of maximally separated $\Gamma$-c-disks $D_1$ and $D_2$. By Lemma \ref{Lem: Product Details}, no pod handle of $T - \mc{H}$ is adjacent to a component of $\mc{H}^-$. Since $\Gamma$ is disjoint from the vertices of $T - \boundary T$, no vertex of $\mc{H}$ is connected in $T$ to a ghost arc of $T - \mc{H}$. Consequently, all parallelisms of $\mc{H}$ satisfy consolidation criteria (C2), (C3), and (C4). Furthermore, we are guaranteed that consolidating a parallelism satisfying (C1) - (C4) produces a multiple $\Gamma$-bridge surface for $(M,T,\Gamma)$ where all parallelisms continue to satisfy (C2) - (C4).

Let $N$ be the component of $M - \mc{K}^-$ containing $H$. Any parallelism of $\mc{H}$ is contained in $N$ and by Lemma \ref{Lem: Product Regions Post-Consol}, consolidating any such parallelisms does not create additional parallelisms. Thus, it suffices to show that each parallelism of $\mc{H} \cap N$ satisfies consolidation criterion (C1) and if $V$ and $W$ are two such parallelisms then consolidating one of them does not cause the other to stop satisfying (C1).

We observe, first, that the component $F$ of $\mc{H}^- \cap N$ adjacent to both $D_1$ and $D_2$ is not parallel to either component of $\mc{H}^+ \cap N$ since the complexity of $F$ differs from the complexity of each of those surfaces. Thus, the component $F$ will never lie in the boundary of a parallelism of $\mc{H}$. Consequently, it suffices to consider only the situation in which all parallelisms of $\mc{H}$ lie on the same side of $F$ in $N$. There can be at most two such parallelisms. Let $V$ be the parallelism of $\mc{H} \cap N$ adjacent to $\mc{H}^- - \boundary N$, if such exists. Let $W$ be the parallelism of $\mc{H} \cap N$ adjacent to $\boundary N$, if such exists. See Figure \ref{Fig: Parallelisms2}.

\begin{figure}
\labellist \small\hair 2pt 
\pinlabel {$V$} [l] at 36 186
\pinlabel {$W$} [l] at 36 106
\pinlabel {$W_-$} [l] at 36 34
\pinlabel {$V_+$} [l] at 36 227
\pinlabel {$\boundary N$} [r] at 25 66
\endlabellist 
\centering 
\includegraphics[scale=.7]{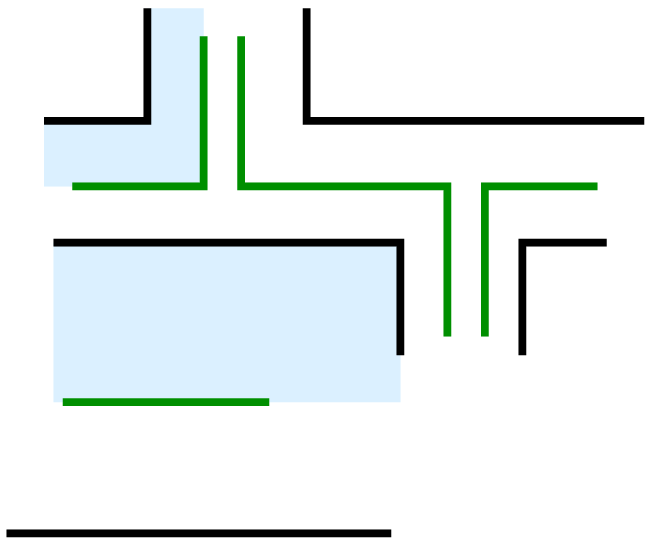}
\caption{The shaded regions denote product regions of $\mc{H}$. $V$ is a parallelism and $W$ is adjacent to $\boundary N$.}
\label{Fig: Parallelisms2}
\end{figure}

Since each compressionbody of $N - \mc{H}$ adjacent to $\mc{H}^-$ intersects $T$ only in vertical arcs and at most one ghost arc, the parallelism $V$ satisfies (C1). If $V = \nil$, then $W_+$ is a compressionbody of $N - \mc{H}$ adjacent to $\mc{H}^-$. The ghost arc in such a compressionbody, if it exists, is disjoint from $\boundary W$. Hence, if $V = \nil$ the parallelism $W$ satisfies the consolidation criteria. Suppose, therefore, that $V \neq \nil$ and $W \neq \nil$. Let $\mc{H}'$ be the result of consolidating $V$ in $\mc{H}$.

Assume that $W$ does not satisfy (C1) and let $\tau \subset (W_- \cup W \cup W_+)$ be a circle component of $T$. Since $\tau \cap W$ consists of vertical arcs, $\tau \cap W_+$ must have a bridge arc. By Lemma \ref{Lem: Product Details}, such a bridge arc intersects $V_+$. By that lemma, $V_+$ is a 3--ball, the component of $D_1 \cup D_2$ adjacent to it is a not a cut-disk and $T \cap V_+$ consists only of a single bridge arc. Thus, $\tau \cap W \cup W_+$ consists of a single arc. Hence, $\tau \cap W_-$ consists of a single arc with both endpoints on $\boundary N \cap \mc{K}^-$. Such an arc is a ghost arc in $W_-$ and so must belong to $\Gamma$. Hence $\tau \subset \Gamma$. See Figure \ref{Fig: Parallelisms3}. Since $\tau$ is the union of a ghost arc in $\tau_-$ with an arc in $W \cup W_+$ that is isotopic into $\boundary_- W_-$. Thus, $\tau$ can be isotoped to lie in the spine of the compressionbody $W_- $. Hence, by Lemma \ref{lem:stabilization}, $\tau$ is isotopically core with respect to $\mc{K}$, a contradiction. 

\begin{figure}
\labellist \small\hair 2pt 
\pinlabel {$V$} [r] at 26 236
\pinlabel {$W$} [r] at 26 154
\pinlabel {$W_-$} [b] at 79 55
\pinlabel {$V_+$} [r] at 26 276
\endlabellist 
\centering 
\includegraphics[scale=.7]{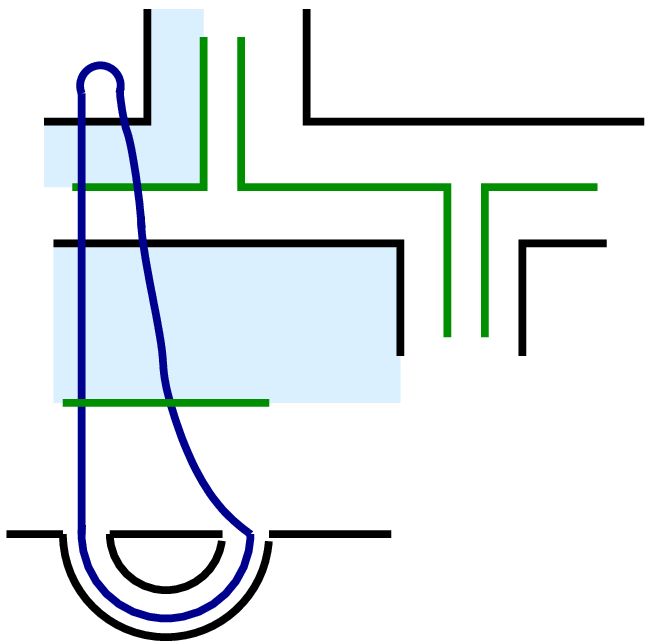}
\caption{The circle $\tau$.}
\label{Fig: Parallelisms3}
\end{figure}
\end{proof}

\section{Untelescoping and Essential Surfaces}
In this section, we put together the previous results on untelescoping and consolidation and prove the main theorem. We begin by naming the main properties we seek.

\begin{defin}
Suppose that $\mc{K}$ and $\mc{H}$ are multiple $\Gamma$-c-Heegaard splitting for $(M,T,\Gamma)$. If $\mc{H}$ is obtained from $\mc{K}$ by a sequence of untelescopings and consolidations, then we say that $\mc{H}$ is obtained by {\em thinning} $\mc{K}$. If $\mc{H}$ has the property that each component of $\mc{H}^+$ is $\Gamma$-c-irreducible in $M - \mc{H}^-$, each component of $\mc{H}^-$ is $\Gamma$-c-incompressible, and no component of $\mc{H}^-$ bounds a parallelism with a component of $\mc{H}^+$, then we say that $\mc{H}$ is {\em slim}. If $\mc{H}$ is slim and if each component of $\mc{H}^-$ is $T$-essential, then $\mc{H}$ is {\em locally thin}. 
\end{defin}

We begin with Theorem \ref{thm: incompressible thin} which shows that we can obtain slim multiple Heegaard splittings. Theorem \ref{thm:main} will strengthen this to produce locally thin multiple Heegaard splittings.

\begin{thm}\label{thm: incompressible thin}
Let $M$ be a compact, orientable $3$-manifold containing a properly embedded graph $T$ and let $\Gamma$ be a subgraph of $T$ disjoint from the vertices of $T - \boundary T$. Furthermore assume that $M-T$ is irreducible and that no sphere in $M$ intersects $T$ exactly once. Suppose $\mc{K}$ is a $\Gamma$-bridge surface for $(M, T,\Gamma)$ such that the following hold:
\begin{itemize}
\item $\mc{K}$ has no parallelisms
\item no pod handle of $T - \mc{K}$ is adjacent to $\mc{K}^-$
\item no component of $\Gamma$ is isotopically core with respect to $\mc{K}$. 
\end{itemize}
Then there is a slim multiple $\Gamma$-bridge splitting $\mc{H}$ for $(M,T, \Gamma)$ obtained by thinning $\mc{K}$.
\end{thm}

\begin{proof}
We define a sequence $(\mc{H}_i)$ of multiple $\Gamma$-c-Heegaard splittings such that each $\mc{H}_i$ satisfies the hypotheses of the theorem and so that for all $i$, $\mc{H}_{i+1}$ is obtained from $\mc{H}_i$ by a sequence of untelescopings using $\Gamma$-c-disks and consolidations. Assume that such an $\mc{H}_i$ does not satisfy the conclusions of the theorem. Since $\mc{H}_i$ has no parallelisms, if a component of $\mc{H}_i^-$ is $\Gamma$-c-compressible, then by Lemma \ref{lem: compressible thin surface}, there is a component of $\mc{H}_i^+$ that is $\Gamma$-c-weakly reducible in $M - \mc{H}_i^-$. Hence, if $\mc{H}_i$ does not satisfy the conclusions of the theorem, some component of $\mc{H}_i^+$ is $\Gamma$-c-weakly reducible in $M - \mc{H}_i^-$.

We define $(\mc{H}_i)$ inductively as follows. Let $\mc{H}_1 = \mc{K}$. Assume that $\mc{H}_i$ has been defined. If $\mc{H}_i$ satisfies the conclusions of the theorem, let $\mc{H}_{i+1} = \mc{H}_i$. If $\mc{H}_{i}$ does not satisfy the conclusions of the theorem, let $\mc{H}_{i+1}$ be the multiple $\Gamma$-c-Heegaard splitting of $(M,T,\Gamma)$ obtained by applying Lemma \ref{lem:removing parallelisms} to $\mc{H}_i$. By that lemma, $\mc{H}_{i+1}$ is obtained from $\mc{H}_i$ by a sequence of untelescopings using $\Gamma$-c-disks and consolidations and it satisfies the hypotheses of the theorem. Hence, we obtain a sequence $(\mc{H}_i)$ as desired.

Since both untelescoping and consolidation strictly reduce complexity, this sequence is eventually constant at a $\Gamma$-multiple Heegaard splitting $\mc{H}$ of $(M,T, \Gamma)$ satisfying the conclusions of the theorem.
\end{proof}

We can now prove our main theorem.

\begin{thm} \label{thm:main}
Let $M$ be a compact, orientable $3$-manifold containing a properly embedded graph $T$ and let $\Gamma$ be a subgraph of $T$ disjoint from the vertices of $T - \boundary T$. Furthermore assume that $M-T$ is irreducible and that no sphere in $M$ intersects $T$ exactly once. Suppose $\mc{K}$ is a multiple $\Gamma$-bridge surface for $(M, T,\Gamma)$ such 
\begin{itemize}
\item $\mc{K}$ has no parallelisms
\item no pod handle of $T - \mc{K}$ is adjacent to $\mc{K}^-$
\item no circle or edge component of $\Gamma$ is isotopically core with respect to $\mc{K}$. 
\end{itemize}
Then there is a multiple $\Gamma$-Heegaard splitting $\mc{H}$ for $(M,T,\Gamma)$ obtained by thinning $\mc{K}$ such that one of the following holds:
\begin{itemize}
\item $\mc{H}$ is locally thin, or
\item some component of $\mc{H}^+$ contains a generalized stabilization in $M - \mc{H}^-$, or
\item some component of $\mc{H}^+$ is perturbed in $M - \mc{H}^-$, or
\item $T$ has a removable path.
\end{itemize}
\end{thm}

\begin{proof}
Let $\mc{H}$ be the multiple $\Gamma$-Heegaard splitting of $(M,T, \Gamma)$ obtained by applying Theorem \ref{thm: incompressible thin} to $\mc{K}$. By that theorem, each component of $\mc{H}^+$ is $\Gamma$-c-weakly reducible in $M - \mc{H}^-$, each component of $\mc{H}^-$ is $\Gamma$-c-incompressible, and no component of $\mc{H}^-$ bounds a parallelisms with a component of $\mc{H}^+$. 

Suppose that a component $F$ of $\mathcal{H}^-$ is $T$-parallel. Let $C_F$ be the compressionbody so that $F = \bdd_+ C_F$ is parallel to the boundary of a regular neighborhood of some components of $\bdd M$ together with some subset of $T$. By Lemma \ref{lem:essential} we may assume that $F$ is innermost, i.e., $C_F$ does not contain any other thin surfaces. Let $H_{C_F}$ be the $\Gamma$-Heegaard splitting for $C_F$ given by the unique thick surface of $\mathcal{H}$ contained in $C_F$. 

\textbf{Case 1:} $T \cap C_F = \nil$.

Recall that $F$ is not parallel to $H_{C_F}$. If $\boundary_- C_F = \nil$, then by \cite{W}, $H$ is stabilized. If $\boundary_- C_F \neq \nil$, by \cite{ST2} $H_{C_F}$ is stabilized or boundary stabilized along $\bdd_-C_F$. See also \cite{MS}. 

\textbf{Case 2:} $H_{C_F}$ is a Heegaard splitting.

Since no thin surface is parallel to a thick surface, by Theorem \ref{thm:HSofCompBodies2} one of the following occurs:
\begin{itemize}
\item $H_{C_F}$ has a generalized stabilization and if this is a boundary stabilization, it is along a component of $\bdd M$; 
\item $H_{C_F}$ is perturbed; or
\item $T \cap C_F$ has a removable path disjoint from $F$.
\end{itemize}
 All of these conclusions are possible conclusions in the theorem at hand.

\textbf{Case 3:} $H_{C_F}$ is a $\Gamma$-Heegaard splitting but not a Heegaard splitting.

Let $A$ and $B$ be the two compressionbodies into which $H_{C_F}$ divides $C_F$. Since $H_{C_F}$ is a $\Gamma$-Heegaard splitting but not a Heegaard splitting, there exists an edge $e \subset \Gamma$ in either $A$ or $B$ which is disjoint from $H_{C_F}$ and which has both endpoints on $\boundary C_F$.

\textbf{Case 3a:} $\boundary e \subset F$.

Since $F$ is parallel to $\boundary M \cup T$ and since $e \subset T$ is an edge with both endpoints on $F$, $T = e$ and $F = S^2$. Then by \cite[Lemma 2.1]{HS1} and \cite[Theorem 1.1]{HS3}, either $H_{C_F}$ is stabilized, meridionally stabilized, or perturbed. See Case 2 of the proof of \cite[Lemma 5.2]{T1} for details. 

\textbf{Case 3b:} $\boundary e \subset \boundary M$

In this case $e$ is disjoint not only from $H_{C_F}$ but also from $\mc{H}$. Then $e$ is an edge of $\Gamma$ with both endpoints on $\boundary M$ which is disjoint from $\mc{H}$. Hence $e$ is isotopically core with respect to $\mc{K}$, a contradiction.

\textbf{Case 3c:} One endpoint of $e$ is on $F$ and one endpoint of $e$ is on $\boundary M$.

We may assume that the hypotheses of cases (3a) and (3b) do not apply. Let $e_1, \hdots, e_n$ be the union of edges of $T \cap C_F$ with one endpoint on $F$, one endpoint on $\boundary M$, and which are disjoint from $H_{C_F}$. Perform a slight isotopy of each of them to convert $T$ into a graph $T'$ and each edge $e_i$ into $e'_i$ so that each $e'_i$ is a removable edge of $T'$ as in Lemma \ref{lem: creating removable edges}. Let $H'_{C_F}$ be the new $\Gamma$-Heegaard surface and notice that $H'_{C_F}$ is, in fact, a Heegaard surface for $C_F$. Since $F$ is not parallel to $H_{C_F}$, by Theorem \ref{thm:HSofCompBodies2} one of the following occurs:
\begin{itemize}
\item $H'_{C_F}$ has a generalized stabilization and if this is a boundary stabilization, it is along a component of $\bdd M$;  
\item $H'_{C_F}$ is perturbed; or
\item $T \cap C_F$ has a removable path with both endpoints in $\boundary M$.
\end{itemize}
Notice that if the last option occurs the removable path is not equal to any of the $e'_i$ since each of those edges has one endpoint on $F$. By Lemma \ref{lem: creating removable edges} one of the following occurs:
\begin{itemize}
\item $H_{C_F}$ has a generalized stabilization and if this is a boundary stabilization, it is along a component of $\bdd M$; 
\item $H_{C_F}$ is perturbed; or
\item $T \cap C_F$ has a removable path disjoint from $F$.
\end{itemize}
All of these are conclusions for the theorem at hand. \end{proof}

Combining the above result with Lemma \ref{lem:stabilization} gives us an almost immediate corollary for bridge surfaces:
\begin{cor} \label{cor:main}
Let $M$ be a compact, orientable $3$-manifold containing a properly embedded graph $T$ and let $\Gamma$ be a subgraph of $T$ disjoint from the vertices of $T - \boundary T$. Furthermore assume that $M-T$ is irreducible and that no sphere in $M$ intersects $T$ exactly once. Suppose $K$ is a bridge surface for $(M, T,\Gamma)$ such no circle or edge component of $\Gamma$ is isotopically core with respect to $K$.  Then one of the following holds:
\begin{itemize}
\item there is a slim multiple $\Gamma$-Heegaard splitting $\mc{H}$ for $(M,T, \Gamma)$ obtained by thinning $K$. Furthermore, if $K$ is not thin, then $\mc{H}^- \neq \nil$.
\item $K$ contains a generalized stabilization,
\item $K$ is perturbed, or
\item $K$ has a removable path.
\end{itemize}
\end{cor}
\begin{proof}

By Theorem \ref{thm:main}, either $T$ has a removable path or there is a multiple $\Gamma$-Heegaard splitting $\mc{H}$ for $(M,T,\Gamma)$ obtained by thinning $K$ such that one of the following holds:
\begin{itemize}
\item $\mc{H}$ is locally thin, or
\item some component of $\mc{H}^+$ contains a generalized stabilization in $M - \mc{H}^-$, or
\item some component of $\mc{H}^+$ is perturbed in $M - \mc{H}^-$.
\end{itemize}
Note that $K$ can be recovered from $\mc{H}$ by a sequence of amalgamations so by Lemma \ref{lem:stabilization} if $\mc{H}^+$ contains a generalized stabilization or perturbation in $M - \mc{H}^-$ so does $K$. Thus the only statement we need to verify is that if $K$ is not thin, then $\mc{H}^- \neq \nil$. To see this, notice that when a bridge surface is untelescoped using a pair of maximally separated disks, the thin surface adjacent to both disks is never adjacent to one of the parallelisms that is possibly created by the untelescoping. Thus it cannot be consolidated until it is adjacent to a thick surface that is untelescoped to create a thick surface parallel to it. Thus, throughout the thinning process there is always a thin surface, although the thin surface may change as the sequence progresses.
\end{proof}


\begin{thebibliography}{99}

\bibitem[AGM]{Ag}
I. Agol, D. Groves, J. Manning
\newblock The virtual Haken Conjecture
\newblock {\em arXiv:1204.2810}, 2012.

\bibitem[CG]{CG}
A. Casson and C. Gordon.
\newblock Reducing {H}eegaard splittings.
\newblock {\em Topology Appl.}, 27(3):275--283, 1987.





\bibitem[CM]{CM}
S. Cho and D. McCullough. 
\newblock The tree of knot tunnels.
\newblock {\em Geom. Topol.},  13  (2009),  no. 2, 769--815.

\bibitem[C]{C}
A. Coward
\newblock Algorithmically detecting the bridge number of hyperbolic knots.
\newblock {\em arXiv:0710.1262}

\bibitem[G]{G} D. Gabai.
\newblock Foliations and the topology of $3$-manifolds. III.
\newblock {\em J. Differential Geom.},  26  (1987),  no. 3, 479--536.

\bibitem[GST]{GST} H. Goda, M. Scharlemann, and A. Thompson.
\newblock Levelling an unknotting tunnel.
\newblock {\em Geom. Topol.},  4  (2000), 243--275 (electronic).

\bibitem[GL]{GL} C. McA. Gordon, C. McA. and J. Luecke.  
\newblock Knots are determined by their complements.
\newblock {\em J. Amer. Math. Soc.},  2  (1989),  no. 2, 371--415.


%
\bibitem[H]{Hk}
W. Haken.
\newblock Some results on surfaces in {$3$}-manifolds.
\newblock {\em Studies in Modern Topology}, 39--98, 1968.

\bibitem[HS1]{HS3}
C. Hayashi and K. Shimokawa.
\newblock Heegaard splittings of the pair of the solid torus and the core loop.
\newblock {\em Rev. Mat. Complut.}, 14(2):479--501, 2001.

\bibitem[HS2]{HS1}
C. Hayashi and K. Shimokawa.
\newblock Heegaard splittings of trivial arcs in compressionbodies.
\newblock {\em J. Knot Theory Ramifications}, 10(1):71--87, 2001.

\bibitem[HS3]{HS2}
C. Hayashi and K. Shimokawa.
\newblock Thin position of a pair (3-manifold, 1-submanifold).
\newblock {\em Pacific J. Math.}, 197(2):301--324, 2001.

\bibitem[L]{L}  
M. Lackenby.
\newblock Heegaard splittings, the virtually Haken conjecture and property
 $(\tau)$.
\newblock{\em Invent. Math.},  164  (2006),  no. 2, 317--359.

\bibitem[Ma]{Ma}
J. Maher. 
\newblock Heegaard gradient and virtual fibers.
\newblock {\em Geom. Topol.}  9  (2005), 2227--2259 (electronic).


\bibitem[MS]{MS} Moriah, Yoav ;  Sedgwick, Eric . 
\newblock Closed essential surfaces and weakly reducible Heegaard splittings in
 manifolds with boundary.
 \newblock {\em J. Knot Theory Ramifications},  13  (2004),  no. 6, 829--843.


\bibitem[S]{S} M. Scharlemann.
\newblock Thin position in the theory of classical knots. 
\newblock {\em Handbook of knot theory},  429--459, Elsevier B. V., Amsterdam,  2005. 




\bibitem[ST3]{ST2}
M. Scharlemann and A. Thompson.
\newblock Heegaard splittings of {$({\rm surface})\times I$} are standard.
\newblock {\em Math. Ann.}, 295(3):549--564, 1993.

\bibitem[ST2]{ST1b}
M. Scharlemann and A. Thompson, Abigail. 
\newblock Thin position and Heegaard splittings of the $3$-sphere.
\newblock {\em J. Differential Geom.}  39  (1994),  no. 2, 343--357.

\bibitem[ST1]{ST1}
M. Scharlemann and A. Thompson.
\newblock Thin position for {$3$}-manifolds.
\newblock In {\em Geometric topology (Haifa, 1992)}, volume 164 of {\em
  Contemp. Math.}, pages 231--238. Amer. Math. Soc., Providence, RI, 1994.

\bibitem[STo2]{STo3}
M. Scharlemann and M. Tomova.
\newblock {Uniqueness of bridge surfaces for 2-bridge knots}.
\newblock{\em Mathematical Proceedings, Cambridge Philosophical Society},

\bibitem[STo1]{STo1}
M. Scharlemann and M. Tomova.
\newblock Alternate Heegaard genus bounds distance.
\newblock {\em Geom. Topol.},  10  (2006), 593--617 (electronic).

\bibitem[Sc] {Sc}
J. Schultens.
\newblock{The classification of {H}eegaard splittings for (compact orientable surface){$\,\times\, S\sp 1$}},
\newblock {\em Proc. London Math. Soc. (3)}, vol. 67 (1993), 2:425--448
   
\bibitem[TT]{TT1}
S. Taylor and M. Tomova.
\newblock {Heegaard surfaces for certain graphs in compressionbodies}.
\newblock{To be published by {\em Revista Matem\'atica Complutense.}} \newline
\newblock{Doi: 10.1007/s13163-011-0075-6} \newline
\newblock{arXiv:0910.3019}

\bibitem[T2]{T2}
M. Tomova. 
\newblock Multiple bridge surfaces restrict knot distance.
\newblock {\em Algebr. Geom. Topol.},  7  (2007), 957--1006.

\bibitem[T1]{T1}
M. Tomova.
\newblock {Thin position for knots in a 3-manifold}.
\newblock {\em J. London Mathematical Society}, (2) 80 (2009) 85--98.

\bibitem[W1]{W}
F. Waldhausen.
\newblock {Heegaard-Zerlegungen der 3-Sph\"are}.
\newblock {\em Topology}, 7 (1968) 195--203.

\bibitem[W2]{W2}F. Waldhausen. 
\newblock{On irreducible 3-manifolds which are sufficiently large.}
\newblock {\em Ann. of Math.} (2)  87  1968 56--88.

\end{thebibliography}
\end{document}